\documentclass[a4paper,12pt]{article}
\usepackage[english]{babel}
\pdfoutput=1
\usepackage{amsfonts}
\usepackage{amsthm}
\usepackage{amsmath}
\usepackage{amssymb}
\usepackage{enumerate}
\usepackage{mathtools}
\usepackage{tikz-cd}
\usepackage[all]{xy}
\usepackage{graphicx}
\usepackage{caption}
\usepackage{amsmath}
\usepackage{extpfeil}
\usepackage{comment}
\usepackage{float}
\usepackage[toc]{appendix}
\usepackage[left=3.2cm,top=2.5cm,right=3.2cm,bottom=2.5cm]{geometry}
\usepackage{hyperref}
\usepackage{comment}
\usepackage{resizegather}
\usepackage[activate={true, nocompatibility}, final, tracking=true, kerning=true, spacing=true]{microtype}
\usepackage{tikz-cd}
\usepackage{subcaption}

\theoremstyle{plain}
\newtheorem{thm}{Theorem}[section]
\newtheorem*{thm*}{Theorem}
\newtheorem{coroll}[thm]{Corollary}
\newtheorem{defn}[thm]{Definition}
\newtheorem{lemma}[thm]{Lemma}

\newtheorem{prop}[thm]{Proposition}

\newtheorem{remark}[thm]{Remark}



\makeatletter
\newcommand{\colim@}[2]{%
  \vtop{\m@th\ialign{##\cr
    \hfil$#1\operator@font colim$\hfil\cr
    \noalign{\nointerlineskip\kern1.5\ex@}#2\cr
    \noalign{\nointerlineskip\kern-\ex@}\cr}}%
}
\newcommand{\colim}{%
  \mathop{\mathpalette\colim@{\rightarrowfill@\textstyle}}\nmlimits@
}
\makeatother


\tikzset{
  symbol/.style={
    draw=none,
    every to/.append style={
      edge node={node [sloped, allow upside down, auto=false]{$#1$}}}
  }
}


\microtypecontext{spacing=nonfrench}

\setlength{\parskip}{6pt}



\begin{document}
\thispagestyle{empty}

\title{Classification of affine normal $SL_2$-varieties with a dense orbit}
\author{Andres Fernandez Herrero and Rodrigo Horruitiner}
\date{}

\maketitle

\begin{abstract}
    In this note we give a classification of all affine normal $SL_2$-varieties containing an open dense orbit over an algebraically closed field of characteristic zero. Such a classification was first obtained by Popov in \cite{popov}. Here we provide an alternative approach.
\end{abstract}

\tableofcontents

\begin{section}{Introduction}

In this note we give a classification of affine normal $SL_2$-varieties with a dense open orbit over an algebraically closed field of characteristic zero. Our proof is based on elementary representation theory and the Hilbert-Mumford criterion.  

An affine algebraic normal $SL_2$-variety $X$ with a dense orbit is determined by a subalgebra $k[X]$ of the coordinate ring $k[SL_2]$ that is stable under the $SL_2$ action. This, in turn, is determined by an ``admissible'' subalgebra $k[X]^{\bar{U}}$ of $k[SL_2]^{\bar{U}}$ of invariants under the lower-triangular unipotent subgroup $\bar{U}$ of $SL_2$ (see Definition \ref{defn: admissible subalgebra}). 

The question is then to determine which subalgebras are admissible. In section 3 we study the multiplicative structure of the left regular representation $k[SL_2]$ and give a criterion for admissibility (Proposition \ref{prop: criterion for admissible subalgebras}). This criterion is a central ingredient in the proofs of the results, and it has a straightforward graphical interpretation inside the lattice of monomials in $k[SL_2]^{\bar{U}}$.

If $k[X]^{\bar{U}}$ has dimension 1, then $X$ is spherical (Proposition \ref{prop: general normal affine when dim 1}). In section 4 we use the Hilbert-Mumford criterion (Lemma \ref{lemma: hilbert mumford criterion}) and the admissibility criterion to obtain the classification in the case when $X$ is spherical (Theorem \ref{thm: classification of affine spherical vars of sl2}).

In section 5 we proceed to the case when $k[X]^{\bar{U}}$ has dimension 2. In this case we use the admissibility criterion to show that $X$ admits an extra torus action induced by the right regular representation on $k[SL_2]$ (Lemma \ref{lemma: general normal affine when dim 2 is right T stable}). This allows us to prove the main theorem of this paper:

\begin{thm*} \textbf{\ref{mainresult}}
Let $X$ be an affine normal $SL_2$-variety with an open dense orbit. Then $X$ is $SL_2$-isomorphic to exactly one of the following
\begin{enumerate}[(1)]
    \item A homogenenous space $SL_2/H$, where $H$ is as in Proposition \ref{prop: classification of homogeneous $SL_2$-vars}.
    \item (See Definition \ref{defn: Rf}) The spherical variety $SL_2 /\!/ (\mu_f \ltimes \overline{U}) = Spec\left(R_{\infty}^{(f)}\right)$ for a unique positive integer $f$.
    \item (See Definition \ref{defn: S_q^f}) $Spec\left(R^{(f)}_q\right)$ for a unique positive integer $f$ and a unique rational number $q \geq 1$ . In this case the stabilizer of the dense orbit is $\mu_f$.
\end{enumerate}
\end{thm*}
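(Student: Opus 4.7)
The plan is to use the correspondence recalled in the introduction between affine normal $SL_2$-varieties $X$ with a dense orbit and admissible subalgebras $k[X]^{\bar{U}} \subseteq k[SL_2]^{\bar{U}}$, and stratify by the Krull dimension of this invariant subring. Since $k[SL_2]^{\bar{U}}$ is a polynomial ring in two highest-weight generators, $\dim k[X]^{\bar{U}} \in \{0,1,2\}$. The dimension-zero case yields only the point, which is homogeneous; more generally, whenever $X$ is itself a homogeneous space, it falls into case (1) directly via Proposition \ref{prop: classification of homogeneous $SL_2$-vars}. Throughout the remainder I assume $X$ is not homogeneous, and split into the two substantive cases $\dim k[X]^{\bar{U}} = 1$ and $\dim k[X]^{\bar{U}} = 2$.

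In the case $\dim k[X]^{\bar{U}} = 1$, Proposition \ref{prop: general normal affine when dim 1} shows that $X$ is spherical, so Theorem \ref{thm: classification of affine spherical vars of sl2} applies. The non-homogeneous entries in that classification are exactly the varieties $SL_2 /\!/ (\mu_f \ltimes \overline{U}) = \Spec R_\infty^{(f)}$ from Definition \ref{defn: Rf}, placing $X$ in case (2), and the parameter $f$ is uniquely recovered from the $T$-weight of a generator of $k[X]^{\bar{U}}$.

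In the case $\dim k[X]^{\bar{U}} = 2$, Lemma \ref{lemma: general normal affine when dim 2 is right T stable} shows that $k[X]$ is additionally stable under the right regular $T$-action, so $k[X]^{\bar{U}}$ is bigraded by the left and right $T$-weights. Identifying $k[SL_2]^{\bar{U}}$ with a bigraded polynomial ring, the subalgebra $k[X]^{\bar{U}}$ is the semigroup algebra of a sub-monoid $M \subseteq \mathbb{Z}_{\geq 0}^2$ of the monomial lattice, and normality of $X$ forces $M$ to be saturated. Hence $M$ is the set of lattice points of a two-dimensional rational convex cone, determined by its two extremal rays. The admissibility criterion (Proposition \ref{prop: criterion for admissible subalgebras}) should force one extremal ray to coincide with a distinguished boundary ray of $\mathbb{Z}_{\geq 0}^2$ (the one aligned with the highest-weight vector), while the other extremal ray is free with rational slope $q \geq 1$; the index of $M$ on the distinguished ray inside the ambient lattice then provides the integer $f \geq 1$. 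The pair $(f,q)$ produces exactly $\Spec R_q^{(f)}$ of Definition \ref{defn: S_q^f}, placing $X$ in case (3), and $(f,q)$ is uniquely determined because $f$ equals the order of the stabilizer $\mu_f$ of the dense orbit and $q$ is the slope of the non-distinguished extremal ray.

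The main obstacle is extracting, from the admissibility criterion, the precise combinatorial constraint that one extremal ray of $M$ must be a fixed boundary ray of $\mathbb{Z}_{\geq 0}^2$: it is not a priori obvious why an arbitrary saturated rank-$2$ submonoid fails to be admissible. Once this is established the remaining parameterization by $(f,q)$ is purely combinatorial. The Hilbert-Mumford criterion (Lemma \ref{lemma: hilbert mumford criterion}) will likely be needed both to locate and describe the dense orbit inside $\Spec R_q^{(f)}$, and to verify that the three families (1), (2), (3) are pairwise non-isomorphic, so that the word ``exactly'' in the statement is justified.
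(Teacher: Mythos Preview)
Your outline matches the paper's proof exactly: the theorem is obtained by combining Proposition~\ref{prop: classification of homogeneous $SL_2$-vars}, Proposition~\ref{prop: general normal affine when dim 1}, and Proposition~\ref{prop: general normal affine when dim 2}, and your sketch of the dimension-$2$ case is precisely the content of the last of these.

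Two small corrections on the point you flag as the ``main obstacle.'' First, the constraint pinning down the cone does not come from the admissibility criterion alone. That $(0,1)$ lies on an extremal ray follows from $b^f \in A$, which is already established inside the proof of Lemma~\ref{lemma: general normal affine when dim 2 is right T stable}; the bound $q \geq 1$ on the other ray comes from the non-homogeneous hypothesis via Hilbert--Mumford (Lemma~\ref{lemma: hilbert mumford criterion}) together with Lemma~\ref{lemma: criterion for spherical classes of limits}, which force every monomial $a^i b^j \in A$ to satisfy $j \geq i$. Second, ``saturated'' must mean saturated in the sublattice $\mathbb{Z}M$ generated by $M$, not in $\mathbb{Z}^2$: otherwise $b^f \in A$ would imply $b \in A$ and collapse $f$ to $1$. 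In the paper this sublattice is identified with $\mathbb{Z}\Sigma^{(f)} = \{(i,j): f \mid j-i\}$, and $A = k[C \cap \Sigma^{(f)}]$ for the cone $C$ you describe.
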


This classification is in fact complete in the case where $X$ has dimension $2$ without the assumption of normality. However, when $X$ has dimension $3$ it is not. See \cite[\S 4]{popov} for examples. 

The results presented here are known, and were first proved by Popov in $\cite{popov}$. Popov considers an equivariant linear embedding $X \subset V$ (\cite[Theorem 2]{popov}), and then proceeds to deduce intrinsic invariants of $X$ from invariants of the embedding (\cite[Theorem 4]{popov}). See also Kraft's book \cite[III.4]{kraft} for a shorter proof in the spirit of Popov's argument. Another approach to this classification can be found in the work of Luna and Vust  \cite[\S 9]{lunavust}, which uses their more general theory of spherical embeddings. 

Our approach, in contrast to the previous ones, is based on the combinatorics of the monomials of $k[X]^{\bar{U}}$ under the multiplicative structure of the left regular representation. This lends itself to a straightforward graphical interpretation of the results, and of the classifying invariants $(q,f)$ inside the lattice of monomials in $k[SL_2]^{\bar{U}}$. Even though the results are known, we hope that someone will find this new presentation to be useful.

See the work of Batyrev and Haddad \cite{batyrev} for applications of Theorem \ref{mainresult} above and further study of these varieties. In fact \cite{batyrev} gives an explicit categorical quotient description of the $3$-dimensional $SL_2$-varieties appearing in the classification. This is used to compute their Cox rings and to construct $SL_2$-equivariant flips as GIT quotients.

\textbf{Acknowledgements.} We would like to thank Dan Halpern-Leistner for asking the question that lead to the classification result, and Allen Knutson for making us aware of the Luna-Vust theory. We would also like to thank Victor Batyrev. from whom we learned about the results in \cite{batyrev}.

\end{section}

\begin{section}{Notation}
Let $k$ be an algebraically closed field of characteristic $0$. All schemes will be assumed to be defined over $k$. In this note we study varieties $X$ equipped with a left action of the algebraic group $SL_2$. We will abbreviate this by saying that $X$ is a $SL_2$-variety. 

We denote by $k[SL_2]$ the coordinate ring of the affine algebraic group $SL_2$. We think of a generic matrix in $SL_2$ as  having entries $\begin{bmatrix}
    a       & b  \\
    c       & d
\end{bmatrix}$. Then we have $k[SL_2] = k[a,b,c,d]/(ad-bc-1)$. The $k$-vector space $k[SL_2]$ acquires the structure of a $SL_2 \times SL_2$ representation. The first copy of $SL_2$ acts via the left regular representation, while the second copy of $SL_2$ acts via the right regular representation. We will employ the superscript $(-)^{op}$ whenever we are thinking of the second copy of $SL_2$. For example a $SL_2^{op}$-subrepresentation will be a subspace that is stable with respect to the right regular action.

We will denote by $\overline{B}$ the Borel subgroup in $SL_2$ consisting of lower triangular matrices. We let $T$ denote the maximal torus $T \subset \overline{B}$ defined by
\[ T = \left\{ \begin{bmatrix}
    t^{-1}       & 0  \\
    0       & t
\end{bmatrix}  \right\} \, \cong \mathbb{G}_m \]

We will write $\overline{U}$ for the unipotent radical of $\overline{B}$.
\end{section}

\begin{section}{Affine pointed $SL_2$-varieties with a dense orbit}
In this section we describe a correspondence between affine normal pointed $SL_2$-varieties with a dense orbit and certain class of subalgebras of the polynomial ring $k[a,b]$. We call this class of subalgebras ``admissible".

Let $X$ be an affine normal $SL_2$-variety. Let $R$ denote the coordinate ring of $X$. Choose $p \in X(k)$. We can define a map $\phi : SL_2 \rightarrow X$ given by $\phi(g) = g\cdot p$. Suppose further that the $SL_2$-orbit of $p$ is dense in $X$. This means that $\phi$ is dominant, which implies that we can view $R$ as a subalgebra of $k[SL_2]$. 

We can therefore understand such pointed $SL_2$-varieties once we determine the (left) $SL_2$-stable, normal, finitely generated subalgebras of $k[SL_2]$. By highest weight theory, these are determined by their $\overline{U}$-invariants.

Note that the $\overline{U}$-invariants of the left regular representation are $k[SL_2]^{\overline{U}} = k[a,b]$. Here the torus $T$ acts with weight $1$ on both $a$ and $b$. This motivates the following definition.
\begin{defn} \label{defn: admissible subalgebra}
Let $A$ be a subalgebra of $k[a,b]$. We say that $A$ is admissible if
\begin{enumerate}[(a)]
    \item $A$ is homogeneous for the grading by total degree (i.e. it is $T$-stable).
    \item $A$ is normal and finitely generated.
    \item Let $V_{A}$ denote the $SL_2$-subrepresentation of $k[SL_2]$ generated by $A$. Then $V_{A}$ is closed under multiplication (i.e. it is a subalgebra of $k[SL_2]$).
\end{enumerate}
\end{defn}

Now we state the promised correspondence.
\begin{lemma} \label{lemma: admissible subalgebras = pointed variaties with dense orbit}
 The map $A \mapsto V_A$ sets up a bijection between admissible subalgebras and $SL_2$-stable, normal, finitely generated $k$-subalgebras of the coordinate ring $k[SL_2]$.
\end{lemma}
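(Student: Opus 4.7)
The plan is to construct an explicit inverse to $A \mapsto V_A$ and to check that both assignments are well-defined. The inverse will send an $SL_2$-stable subalgebra $R \subset k[SL_2]$ to its ring of $\overline{U}$-invariants $R^{\overline{U}} \subset k[SL_2]^{\overline{U}} = k[a,b]$. The core formal tool is the highest weight correspondence for $SL_2$: because $k[SL_2]$ is completely reducible as an $SL_2$-module and each irreducible summand has a one-dimensional line of $\overline{U}$-invariants, the assignments $W \mapsto W^{\overline{U}}$ and $A_0 \mapsto V_{A_0}$ set up mutually inverse bijections between $SL_2$-subrepresentations of $k[SL_2]$ and $T$-stable subspaces of $k[a,b]$. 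With this bijection in hand, the whole question reduces to matching finite generation, normality, and closure under multiplication on the two sides.

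Starting from an admissible $A$, condition (c) is exactly the statement that $V_A$ is a subalgebra, and $SL_2$-stability is built into the construction. For finite generation I would pick algebra generators $f_1, \dots, f_n$ of $A$, note that each $f_i$ lies in a finite-dimensional $SL_2$-subrepresentation $W_i \subset k[SL_2]$ by local finiteness of the regular representation, and observe that the $k$-subalgebra generated by $W_1 \cup \dots \cup W_n$ is $SL_2$-stable and contains $A$, hence coincides with $V_A$. Conversely, given $R$ an $SL_2$-stable, normal, finitely generated subalgebra, $T$-stability of $R^{\overline{U}}$ is immediate, normality of $R^{\overline{U}}$ follows from the general fact that invariants of a normal domain under a group action are normal, and finite generation of $R^{\overline{U}}$ I would deduce from the Hadziev--Grosshans theorem on the finite generation of $U$-invariants of finitely generated algebras acted upon by a reductive group. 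Moreover $V_{R^{\overline{U}}} = R$ by the highest weight correspondence, which is already a subalgebra, so (c) is satisfied and $R^{\overline{U}}$ is admissible.

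The genuinely non-formal step, and the one I expect to be the main obstacle, is transferring normality from $A$ to $V_A$ in the first direction. Here I plan to invoke the reductive-group counterpart---namely, that the coordinate ring of an affine $SL_2$-variety is normal if and only if its $\overline{U}$-invariants form a normal algebra---combined with the identity $V_A^{\overline{U}} = A$ from the highest weight correspondence, which together give normality of $V_A$. Once both directions are verified, mutual inverseness is immediate: $R \mapsto R^{\overline{U}} \mapsto V_{R^{\overline{U}}} = R$ and $A \mapsto V_A \mapsto V_A^{\overline{U}} = A$.
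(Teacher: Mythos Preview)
Your proposal is correct and follows essentially the same approach as the paper: construct the inverse $R \mapsto R^{\overline{U}}$, use highest weight theory to establish the basic bijection, and then match finite generation and normality on the two sides via the same arguments (local finiteness for $A$ finitely generated $\Rightarrow V_A$ finitely generated, a Hadziev--Grosshans type result for the converse, and the normality transfer via the known fact that $R$ is normal iff $R^{\overline{U}}$ is). The paper cites Perrin's notes for the finite-generation and normality steps where you cite Hadziev--Grosshans and the reductive-group normality criterion directly, but the content is the same.
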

\begin{proof}
Let's define an inverse map. For any $SL_2$-stable subalgebra $R$ of $k[SL_2]$, we define $A_R \vcentcolon = R^{\overline{U}} = R \cap k[a,b]$. This is a subalgebra of $k[a,b]$. It is $T$-stable because $R$ is $T$-stable. By highest weight theory, any representation of $SL_2$ is generated by its subspace of $\overline{U}$-invariants. So we get $V_{A_R} = R$. 

This establishes a bijection between $SL_2$-stable subalgebras of $k[SL_2]$ and subalgebras of $k[a,b]$ satisfying conditions (a) and (c) in Definition \ref{defn: admissible subalgebra}. In order to conclude the proof of the lemma, it suffices to show that $R$ is finitely generated and normal if and only if $A_{R}$ is finitely generated and normal. 

Suppose that $R$ is finitely generated. Then $R^{\overline{U}}$ is finitely generated by \cite{perri_intro_spherical}[Thm. 4.1.12]. Conversely, suppose that $A_{R}$ is finitely generated. Let $\{v_i\}_{i \in I}$ be a finite set of generators of $A_R$. Let $W_{\{v_i\}}$ denote the finite dimensional $SL_2$-submodule of $k[SL_2]$ generated by the $v_i$. Then we have that the algebra $k[W_{\{v_i\}}]$ generated by $W_{\{v_i\}}$ is a left $SL_2$-stable subalgebra of $R$. By assumption on the set $\{v_i\}$, we know that $k[W_{\{v_i\}}]$ contains $A_R$. Therefore, we must have $k[W_{\{v_i\}}] = R$ by highest weight theory. So $R$ is finitely generated.

The fact that $R$ is normal if and only if $A_{R}$ is normal follows from the argument in \cite{perri_intro_spherical}[Prop. 4.1.16 (ii)].
\end{proof}

The rest of this section is dedicated to proving Proposition \ref{prop: criterion for admissible subalgebras}. This is a criterion for determining when condition (c) in Definition \ref{defn: admissible subalgebra} is satisfied. 

We will need to understand the multiplicative structure of the left regular representation $k[SL_2]$. Consider the coordinate ring $W = k[a,b,c,d]$ of the vector space of $2 \times 2$ matrices. $SL_2$ acts on this ring by precomposing with the inverse of multiplication on the left. We have $W = \bigoplus_{i,j \in \mathbb{N}} W^{i,j}$ where
\[ W^{i,j} = Sym^i\left(ka \oplus kc\right) \otimes Sym^j\left(kb \oplus kd\right) \]

By representation theory of $SL_2$, we know that $W^{(i,j)}$ breaks up into a direct sum of irreducibles
\[ W^{i,j} = \bigoplus_{s = 0}^{min(i,j)} L^{i,j}_{i+j-2s}\]
Here $L^{i,j}_{i+j-2s}$ has highest weight $i+j-2s$ with respect to the Borel $\overline{B}$. It is apparent that $L^{i,j}_{i+j}$ has highest weight vector $a^ib^j$. In order to determine the highest weight vectors for the other summands, we will do a trick with the determinant. Note that multiplication by the invariant polynomial $ad-bc$ induces an injective map $W^{i,j} \hookrightarrow W^{i+1,j+1}$. By the decomposition above, this must induce isomorphisms $L^{i,j}_{i+j-2s} \xrightarrow{\sim} L^{i+1,j+1}_{i+j-2s}$. We conclude by using this map that $L^{i,j}_{i+j-2s}$ has highest weight vector $(ad-bc)^sa^{i-s}b^{j-s}$.

Thanks to the weight vectors obtained just above, we can describe explicitly the highest weight vectors of a tensor product of abstract irreducible representations of $SL_2$. But before we do that we need some notation.
\begin{defn} \label{defn: lowering operators}
Let $V$ be a representation of $SL_2$ and let $v \in V$ be nonzero weight vector with weight $n$. Then for every $e \in \mathbb{N}$, we define $\langle -e \rangle v$ to be the unique vectors determined by the equation
\[ \begin{bmatrix}
    1       & -x  \\
    0       & 1
\end{bmatrix} \cdot v = \sum_{e \in \mathbb{N}} \binom{n}{e}\,  x^e \,  \langle -e \rangle v \]

In other words, $\langle -d \rangle v$ is the component of weight $n-2e$ in the vector\\ $\begin{bmatrix}
    1       & -1  \\
    0       & 1
\end{bmatrix} \cdot \left(\binom{n}{e}^{-1} v\right)$
\end{defn}

\begin{prop} \label{prop: explicit highest weight vectors for tensors}
Let $V^1$, $V^2$ be two irreducible representations of $SL_2$ with highest weight $n_1$ and $n_2$ respectively. Let $v^1$ and $v^2$ be highest weight vectors of $V^1$ and $V^2$ respectively. The tensor product breaks up as a direct sum of irreducible representations
\[ V^1 \otimes V^2 = \bigoplus_{s =0}^{min(n_1,n_2)} V_{n_1+n_2-2s} \]
where $V_{n_1+n_2-2s}$ is an irreducible representation of highest weight $n_1 +n_2 -2s$ and highest weight vector
\[ v_{s} =  \sum_{e = 0}^s (-1)^e \binom{s}{e} \langle -e \rangle v_1 \otimes \langle e-s \rangle v_2  \]
\end{prop}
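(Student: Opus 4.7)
The plan is to verify directly that $v_s$ is a nonzero weight vector of weight $n_1+n_2-2s$ annihilated by the Lie algebra of $\overline{U}$, and then invoke the Clebsch--Gordan decomposition $V^1\otimes V^2 = \bigoplus_{s=0}^{\min(n_1,n_2)} V_{n_1+n_2-2s}$, which is a standard consequence of character theory.

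Let $E\in\mathfrak{sl}_2$ denote the generator of the upper triangular unipotent subgroup (so the group element appearing in Definition~\ref{defn: lowering operators} equals $\exp(-xE)$), and let $F\in\mathfrak{sl}_2$ denote the generator of $\mathrm{Lie}(\overline{U})$. The weight check is immediate: interpreting $\langle e-s\rangle v_2$ as $\langle -(s-e)\rangle v_2$, every summand in the definition of $v_s$ has weight $(n_1-2e)+(n_2-2(s-e)) = n_1+n_2-2s$.

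For the $\overline{U}$-invariance, the crux is to compute the action of $F$ on the vectors $\langle -e\rangle v$. Comparing powers of $x$ in the defining equation gives $\langle -e\rangle v = \tfrac{(-1)^e(n-e)!}{n!}\,E^e v$ for any highest weight vector $v$ of weight $n$, and combined with the classical identity $FE^e v = e(n-e+1)E^{e-1}v$ this yields
\[
F\cdot\langle -e\rangle v \;=\; -e\,\langle -(e-1)\rangle v.
\]
Applying $F = F\otimes 1 + 1\otimes F$ to $v_s$ via the Leibniz rule then produces two sums; after shifting the summation index in the first by one and using the two binomial identities $(e+1)\binom{s}{e+1} = s\binom{s-1}{e}$ and $(s-e)\binom{s}{e} = s\binom{s-1}{e}$, the two sums match term by term with opposite signs and cancel, giving $F\cdot v_s = 0$.

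Nonvanishing is straightforward: for $s\leq\min(n_1,n_2)$, each factor $\langle -e\rangle v_1$ and $\langle -(s-e)\rangle v_2$ is nonzero, and the factors have pairwise distinct weights within each tensor slot. Hence the tensors $\langle -e\rangle v_1\otimes\langle -(s-e)\rangle v_2$ for $e=0,\ldots,s$ are linearly independent in $V^1\otimes V^2$, and the nonzero coefficients $(-1)^e\binom{s}{e}$ force $v_s\neq 0$. Clebsch--Gordan then identifies $v_s$ as the highest weight vector of the summand $V_{n_1+n_2-2s}$, since the weights $n_1+n_2-2s$ are pairwise distinct in $s$. The main obstacle is the combinatorial bookkeeping in the two-sum cancellation; the other steps are routine weight tracking and an appeal to Clebsch--Gordan.
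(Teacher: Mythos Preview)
Your argument is correct: the weight check, the identity $F\cdot\langle -e\rangle v = -e\,\langle -(e-1)\rangle v$, the two-sum cancellation, and the nonvanishing all go through as you describe, and together with Clebsch--Gordan this establishes the proposition.

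The paper takes a different route. Rather than a direct Lie-algebra verification, it fixes the concrete realization $V^1 = \Sym^{n_1}(ka\oplus kc)$, $V^2 = \Sym^{n_2}(kb\oplus kd)$ with $v^1=a^{n_1}$, $v^2=b^{n_2}$, and then computes that in this model the formula for $v_s$ collapses to $(ad-bc)^s a^{n_1-s}b^{n_2-s}$, which the preceding paragraph (the ``determinant trick'') had already identified as the highest weight vector of the correct summand of $W^{n_1,n_2}$. Since the statement is intrinsic, checking it in one realization suffices. This approach is shorter because it recycles the structural work already done on $W^{i,j}$ and avoids all the binomial bookkeeping; it also makes the formula in Corollary~\ref{coroll: explicit description of highest weight vectors for the product} essentially immediate. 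Your approach, by contrast, is fully self-contained and would work in any context where one does not have such a convenient concrete model at hand.
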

\begin{proof}
It suffices to prove this theorem for a specific realization of the irreducible representations $V^1$ and $V^2$. For example we can use $V^1 \cong Sym^{n_1}\left(ka \oplus kc\right)$ and $V^2 \cong Sym^{n_2}\left(kb \oplus kd\right)$. We can take $v^1 = a^{n_1}$ and $v^2 = b^{n_2}$. Now one can check that the vector $v_s$ described in this proposition reduces to $(ad-bc)^sa^{n_1-s}b^{n_2-s}$. We have seen in our discussion above that this is indeed the required highest weight vector.
\end{proof}

\begin{coroll} \label{coroll: explicit description of highest weight vectors for the product}
Let $p_1 = \sum_{i =0}^{n_1} z_{1,i} a^ib^{n_1-i}$ be a homogeneous polynomial of total degree $n_1$ in $k[a,b]$. Similarly, let $p_2 = \sum_{j =0}^{n_2} z_{2,j} a^jb^{n_2-j}$ be a homogeneous polynomial of degree $n_2$. Let $V^{p_1}$ (resp. $V^{p_2}$) be the irreducible subrepresentation of $W =k[a,b,c,d]$ with highest weight vector $p_1$ (resp. $p_2$). Then the tensor product decomposes into a direct sum
\[V^{p_1} \otimes V^{p_2} = \bigoplus_{s =0}^{min(n_2, n_2)} V^{p_1,p_2}_{n_1 + n_2 -2s}  \]
where $V^{p_1,p_2}_{n_1 + n_2 -2s}$ is an irreducible representation of highest weight $n_1+n_2-2s$. 

The image of a highest weight vector $v^{p_1,p_2}_{s}$ of $V^{p_1,p_2}_{n_1 + n_2 -2s}$ under the multiplication map $m: V^{p_1} \otimes V^{p_2} \rightarrow W$ is
\[ m(v^{p_1,p_2}_{s}) = \sum_{\alpha=s}^{n_1+n_2 -s} y_{\alpha,s}^{p_1,p_2} \, (ad-bc)^{s}\cdot a^{\alpha -s} \cdot b^{n_1+n_2-\alpha -s} \]
where $y_{\alpha, s}^{p_1,p_2}$ is given by
\[y_{\alpha,s}^{p_1,p_2} = \sum_{i+j = \alpha} \sum_{e=0}^s (-1)^e \frac{\binom{s}{e}}{\binom{n_1}{e} \binom{n_2}{s-e}} \binom{n_1-i}{e}\binom{n_2-j}{s-e} z_{1,i}z_{2,j}\]
\end{coroll}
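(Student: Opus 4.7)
The plan is to combine Proposition \ref{prop: explicit highest weight vectors for tensors} with an explicit computation of the lowering operators, and then read off the coefficients by extracting a single well-chosen monomial from the resulting element of $W$.

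First, Proposition \ref{prop: explicit highest weight vectors for tensors} applied to the irreducible representations $V^{p_1}, V^{p_2}$ (with highest weight vectors $p_1, p_2$) directly gives the claimed decomposition $V^{p_1} \otimes V^{p_2} = \bigoplus_{s=0}^{\min(n_1,n_2)} V^{p_1,p_2}_{n_1+n_2-2s}$, with a distinguished highest weight vector
\[ v_s^{p_1,p_2} = \sum_{e=0}^{s} (-1)^e \binom{s}{e}\, \langle -e\rangle p_1 \otimes \langle e-s\rangle p_2. \]

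Second, I would argue that $m(v_s^{p_1,p_2})$ must have the claimed shape on abstract grounds, before touching coefficients. Since $m$ is $SL_2$-equivariant, $m(v_s^{p_1,p_2})$ is a $\overline{U}$-invariant weight vector of weight $n_1+n_2-2s$ in $W$. Decomposing by bidegree and using the discussion preceding the proposition, the $\overline{U}$-invariant weight-$(n_1+n_2-2s)$ subspace of $W^{\alpha,n_1+n_2-\alpha}$ is either zero or one-dimensional and spanned by $(ad-bc)^s a^{\alpha-s} b^{n_1+n_2-\alpha-s}$; it is nonzero precisely when $s \leq \alpha \leq n_1+n_2-s$. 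This justifies an expansion
\[ m(v_s^{p_1,p_2}) = \sum_{\alpha=s}^{n_1+n_2-s} y_{\alpha,s}^{p_1,p_2}\,(ad-bc)^s a^{\alpha-s} b^{n_1+n_2-\alpha-s} \]
for some scalars $y_{\alpha,s}^{p_1,p_2} \in k$, with no further input needed.

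Third, to pin down $y_{\alpha,s}^{p_1,p_2}$, I would extract the coefficient of the monomial $a^{\alpha} b^{n_1+n_2-\alpha-s} d^{s}$ on both sides. On the right, only the $s' = s$ summand contributes (the others have different bidegree or nonzero power of $c$), and within $(ad-bc)^s a^{\alpha-s}b^{n_1+n_2-\alpha-s}$ this monomial has coefficient $1$, coming from the $(ad)^s$ term. On the left, the matrix $\bigl(\begin{smallmatrix} 1 & -x \\ 0 & 1 \end{smallmatrix}\bigr)$ acts on $k[a,b,c,d]$ by $a \mapsto a+xc$, $b \mapsto b+xd$, so Definition \ref{defn: lowering operators} gives
\[ \langle -e\rangle p_1 = \binom{n_1}{e}^{-1}[x^e]\sum_i z_{1,i}(a+xc)^i(b+xd)^{n_1-i}, \]
and similarly for $\langle e-s\rangle p_2$. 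Requiring a pure $a^{\alpha} b^{n_1+n_2-\alpha-s} d^{s}$ contribution (no $c$'s, all $d$'s) forces the $c$-exponents from each factor to vanish and the $d$-exponents to be $e$ and $s-e$; this leaves only the indices $i+j=\alpha$ in the $z$-sum, and yields precisely
\[ y_{\alpha,s}^{p_1,p_2} = \sum_{i+j=\alpha}\sum_{e=0}^{s}(-1)^e\frac{\binom{s}{e}}{\binom{n_1}{e}\binom{n_2}{s-e}}\binom{n_1-i}{e}\binom{n_2-j}{s-e}\,z_{1,i}z_{2,j}. \]

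The main obstacle is simply the binomial bookkeeping in the last step, but the trick that makes it painless is the choice of monomial $a^{\alpha}b^{n_1+n_2-\alpha-s}d^{s}$: it is the unique monomial of bidegree $(\alpha,n_1+n_2-\alpha)$ in $(ad-bc)^s a^{\alpha-s}b^{n_1+n_2-\alpha-s}$ with no factor of $c$, which collapses all summation indices from the two lowering operators except $i,j,e$.
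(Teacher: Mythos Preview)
Your proof is correct and follows essentially the same route as the paper's: use $SL_2$-equivariance of $m$ together with the known highest weight vectors of $W$ to reduce to determining scalar coefficients, then pin those down by extracting the coefficient of $a^{\alpha}b^{n_1+n_2-\alpha-s}d^{s}$ after expanding the lowering operators via the binomial theorem. The paper says exactly this, only more tersely; your version simply fills in the bookkeeping details the paper leaves to the reader.
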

\begin{proof}
We know that $m(v^{p_1,p_2}_{s})$ is a linear combination of highest weight vectors in $W$, because multiplication is $SL_2$-equivariant. We have seen that the highest weight vectors in $W$ of weight $n_1+n_2-2s$ and total degree $n_1+n_2$ in the variables $a,b,c,d$ are of the form $(ad-bc)^s\cdot a^{\alpha-s}\cdot b^{n_1+n_2-\alpha -s}$ for some $s \leq \alpha \leq n_1+n_2-s$. So we only need to determine the coefficients $y_{\alpha,s}^{p_1,p_2}$. This can be done by using the formula in Proposition \ref{prop: explicit highest weight vectors for tensors}, expanding with the binomial theorem and looking at the coefficient of $a^{\alpha} \cdot d^{s} \cdot b^{n_1+n_2-\alpha-s}$.
\end{proof}

We conclude this section with a useful criterion to determine if a subalgebra of $k[a,b]$ is admissible.
\begin{prop} \label{prop: criterion for admissible subalgebras}
Let $A \subset k[a,b]$ be a normal, finitely generated homogeneous subalgebra. Then $A$ is admissible if and only if for all pairs of homogeneous polynomials $p_1 = \sum_{i =0}^{n_1} z_{1,i} a^ib^{n_1-i}$ and $p_2 = \sum_{j =0}^{n_2} z_{2,j} a^jb^{n_2-j}$ in $A$, all of the polynomials $w^{p_1,p_2}_{s}$ defined for $0 \leq s \leq min(n_1,n_2)$ by
\[ w^{p_1,p_2}_{s} = \sum_{\alpha=s}^{n_1+n_2 -s} y_{\alpha,s}^{p_1,p_2} \, a^{\alpha -s}\, b^{n_1+n_2-\alpha-s}\]
with
\[y_{\alpha,s}^{p_1,p_2} = \sum_{i+j = \alpha} \sum_{e=0}^s (-1)^e \frac{\binom{s}{e}}{\binom{n_1}{e} \binom{n_2}{s-e}} \binom{n_1-i}{e}\binom{n_2-j}{s-e} z_{1,i}z_{2,j}\]
also belong to the subalgebra $A$.
\end{prop}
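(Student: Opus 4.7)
The plan is to reformulate condition (c) of Definition \ref{defn: admissible subalgebra} in terms of products of $SL_2$-subrepresentations and then apply Corollary \ref{coroll: explicit description of highest weight vectors for the product}. Since $A$ is homogeneous by condition (a), the $SL_2$-subrepresentation $V_A \subset k[SL_2]$ is spanned by the subrepresentations $V^p$ as $p$ ranges over homogeneous elements of $A$. In particular, $V_A \cdot V_A$ is spanned by the products $V^{p_1} \cdot V^{p_2}$ over such pairs, so $V_A$ is a subalgebra of $k[SL_2]$ if and only if $V^{p_1} \cdot V^{p_2} \subset V_A$ for every pair of homogeneous $p_1, p_2 \in A$.

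Next I would describe $V^{p_1} \cdot V^{p_2}$ explicitly via the Clebsch--Gordan decomposition. As the multiplication map $V^{p_1} \otimes V^{p_2} \to k[SL_2]$ is $SL_2$-equivariant, its image is the sum of the $SL_2$-subrepresentations generated by the highest weight vectors $m(v^{p_1,p_2}_s)$ produced in Corollary \ref{coroll: explicit description of highest weight vectors for the product}, for $0 \leq s \leq \min(n_1, n_2)$. The crucial observation is that the formula in that corollary contains the factor $(ad-bc)^s$, which becomes $1$ in the quotient $k[SL_2] = k[a,b,c,d]/(ad-bc-1)$. Consequently $m(v^{p_1,p_2}_s)$ reduces to $w^{p_1,p_2}_s \in k[a,b] = k[SL_2]^{\overline{U}}$, and $V^{p_1} \cdot V^{p_2}$ is precisely the $SL_2$-subrepresentation of $k[SL_2]$ generated by the polynomials $w^{p_1,p_2}_s$.

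Finally, since each $w^{p_1,p_2}_s$ is $\overline{U}$-invariant and $V_A^{\overline{U}} = A$ (as shown in the proof of Lemma \ref{lemma: admissible subalgebras = pointed variaties with dense orbit}), the inclusion $V^{p_1} \cdot V^{p_2} \subset V_A$ is equivalent to $w^{p_1,p_2}_s \in A$ for all admissible $s$. Combined with the reduction in the first paragraph, this yields the desired equivalence.

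The substantive representation-theoretic content has already been established in Proposition \ref{prop: explicit highest weight vectors for tensors} and Corollary \ref{coroll: explicit description of highest weight vectors for the product}; the main remaining point is simply the observation that passing from $W = k[a,b,c,d]$ to $k[SL_2]$ removes the factor $(ad-bc)^s$, converting the explicit highest weight vector formulas into statements about polynomials in $k[a,b]$. No difficulty arises when some $w^{p_1,p_2}_s$ vanishes: the corresponding summand of $V^{p_1} \otimes V^{p_2}$ then lies in the kernel of multiplication, so the membership condition for that $s$ is automatic.
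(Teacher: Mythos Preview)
Your proposal is correct and follows essentially the same approach as the paper's proof: reduce closure of $V_A$ under multiplication to the containments $V^{p_1}\cdot V^{p_2}\subset V_A$, identify the highest weight vectors of $V^{p_1}\cdot V^{p_2}$ via Corollary~\ref{coroll: explicit description of highest weight vectors for the product}, and use $ad-bc=1$ in $k[SL_2]$ to recognize them as the $w^{p_1,p_2}_s$. You have simply made explicit the details (the use of $V_A^{\overline{U}}=A$, the treatment of vanishing $w^{p_1,p_2}_s$) that the paper compresses into a single sentence.
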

\begin{proof}
Let $V_A$ denote the $SL_2$ subrepresentation of $k[SL_2]$ generated by $A$. We can see that $V_A$ is closed under multiplication if and only if for all pairs of homogeneous polynomials $(p_1,p_2)$ we have that all of the highest weight vectors of the product $V^{p_1} \cdot V^{p_2}$ are also in $A$. By Proposition \ref{coroll: explicit description of highest weight vectors for the product} these highest weight vectors are just the polynomials $w^{p_1,p_2}_s$ (recall that $ad-bc =1$ in $k[SL_2]$).
\end{proof}

\begin{figure}[H]
\centering
\begin{tikzpicture}[scale=1.5]

\fill[gray!20] (0,0.5) -- (1.5,2) -- (0,3.5) -- (0,0.5);
\draw[step=.5cm,gray,very thin] (0,0) grid (2,3.6);
\draw [->] (0,0) -- (2.3,0) node[anchor=north]{$u$};
\draw [->] (0,0) --  (0,4) node[anchor=east]{$v$};
\draw [-,thick] (0,1.5) -- (0.5,1) node[fill=none,anchor=west]{$p_1$};

\draw [-,thick] (0,2) -- (1,1)    node[fill=none,anchor=west]{$p_2$};

\draw [-,] (1.5,2) -- (0,3.5)  node[fill=none,anchor=east]{$p_1 p_2 = w_0^{p_1, p_2}$};
\draw [-,] (1,1.5) -- (0,2.5)  node[fill=none,anchor=east]{$w_1^{p_1, p_2}$};
\draw [-,] (0.5,1) -- (0,1.5)  node[fill=none,anchor=east]{$w_2^{p_1, p_2}$};
\draw [-,] (0,0.5) -- (0,0.5)  node[fill=none,anchor=east]{$w_3^{p_1, p_2}$};

\end{tikzpicture}
\caption{Admissibility criterion for two homogeneous polynomials $p_1 = ab^2 + b^3$, $p_2 = a^2 b^2 + ab^3 + b^4$}\label{diagram example}.
\end{figure}
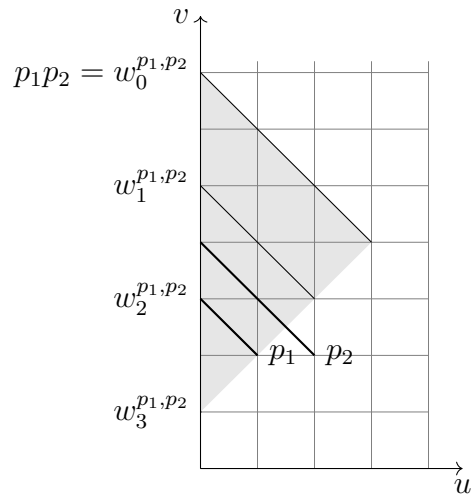

This criterion has a simple graphical interpretation. Give two polynomials $p_1$ and $p_2$ in $A$, we may consider their Newton polygons (i.e. the convex hull of points $(u,v) \in \mathbb{N}^2$ corresponding to monomials $a^u b^v$ in their support) and draw a diagram as in Figure \ref{diagram example} above.

As long as the coefficients $y_{\alpha,s}^{p_1,p_2}$ are nonzero, we can deduce that some elements of convenient form must be present in the algebra $A$ starting from some known elements $p_1$, $p_2$ (for example, $w_3^{p_1,p_2} = y^{p_1, p_2}_{3,3} b$ in the image).

\end{section}

\begin{section}{Classification of affine spherical $SL_2$-varieties}
For this section, we will put an additional assumption on our affine pointed $SL_2$-variety $(X, p)$. Recall that we can use the point $p$ to view $X$ as $Spec(R)$ for some $SL_2$-stable subalgebra $R \subset k[SL_2]$.
\begin{lemma} \label{lemma: criterion spherical multiplicites}
Let $X = Spec(R)$ be an affine normal pointed variety as above. Then the following statements are equivalent.
\begin{enumerate}[(1)]
\item The orbit $\overline{B}\cdot p$ is dense.
\item Every $T$-weight space of the algebra of invariants $R^{\overline{U}} = R \cap k[a,b]$ is one-dimensional.
\end{enumerate}
\end{lemma}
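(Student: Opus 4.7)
The plan is to reformulate both conditions in terms of the field of $\overline B$-invariant rational functions on $X$, and then exploit the highest-weight decomposition of $R$ as an $SL_2$-representation.

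By highest-weight theory one has
\[ R \;\cong\; \bigoplus_{n \geq 0} V_n \otimes (R^{\overline U})_n, \]
where $V_n$ denotes the irreducible $SL_2$-representation of highest weight $n$ and $(R^{\overline U})_n$ (the $T$-weight $n$ part of $R^{\overline U}$) is its multiplicity space. Condition (2) is thus equivalent to $R$ being multiplicity-free as an $SL_2$-representation.

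For $(1) \Rightarrow (2)$: since $X$ is irreducible and $\overline B \cdot p$ is dense, any $\overline B$-invariant rational function on $X$ is constant on $\overline B \cdot p$ and hence constant, so $k(X)^{\overline B} = k$. If some weight space $(R^{\overline U})_n$ contained linearly independent elements $f_1, f_2$, the ratio $f_1/f_2 \in k(X)$ would be $\overline U$-invariant (being a quotient of $\overline U$-invariants) and of $T$-weight zero, hence a nonconstant element of $k(X)^{\overline B}$, contradicting the above.

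For $(2) \Rightarrow (1)$: the strategy is to show $k(X)^{\overline B} = k$ and then invoke Rosenlicht's theorem to produce a dense $\overline B$-orbit in $X$. One computes $k(X)^{\overline B} = (k(X)^{\overline U})^T$ using the standard identification $k(X)^{\overline U} = \mathrm{Frac}(R^{\overline U})$ for the unipotent group $\overline U$ acting on the normal affine variety $X$. Any $T$-invariant element of $\mathrm{Frac}(R^{\overline U})$ can be written as a ratio $f/g$ with $f, g \in R^{\overline U}$ homogeneous of the same $T$-weight $n$; by hypothesis $(R^{\overline U})_n$ is one-dimensional, so $f$ and $g$ are proportional and the ratio lies in $k$. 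Hence $k(X)^{\overline B} = k$.

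The main obstacle is the pair of external inputs needed for $(2) \Rightarrow (1)$, namely the identification $k(X)^{\overline U} = \mathrm{Frac}(R^{\overline U})$ and Rosenlicht's theorem relating a trivial field of invariants to the existence of a dense orbit. A minor bookkeeping issue is that Rosenlicht only yields the existence of some dense $\overline B$-orbit rather than $\overline B \cdot p$ itself; this is handled by observing that the open $\overline B$-orbit lies inside the open $SL_2$-orbit through $p$, so $p$ may be taken (after translation by an element of $SL_2$ if necessary) to lie in it.
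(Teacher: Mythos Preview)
The paper does not give its own proof here; it simply cites Lemma~4.2.2 of Perrin's notes. Your argument supplies the standard direct proof: $(1)\Rightarrow(2)$ via a nonconstant ratio $f_1/f_2\in k(X)^{\overline B}$, and $(2)\Rightarrow(1)$ via the identification $k(X)^{\overline U}=\mathrm{Frac}(R^{\overline U})$ (which is exactly Lemma~\ref{lemma: invariants of fraction field} below) together with Rosenlicht's theorem. Both directions are correct as written.

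Your closing observation is sharp and in fact unavoidable. The literal claim that $\overline B\cdot p$ is dense for the \emph{given} point $p$ need not follow from (2): take $X=SL_2/T$ with $p$ the identity coset, so that $R^{\overline U}=k[ab]$ has one-dimensional weight spaces, yet $\overline B\cdot p=\overline B/T$ is only one-dimensional inside the two-dimensional $X$. What your argument actually establishes is that $X$ is spherical. This causes no trouble, since condition (2) depends only on $R$ as an $SL_2$-representation and not on the particular embedding into $k[SL_2]$; replacing $p$ by a point of the open $\overline B$-orbit therefore preserves (2) and yields (1). The paper uses the lemma in precisely this way, always choosing $p$ inside the dense $\overline B$-orbit (see the opening line of the proof of Proposition~\ref{prop: classification of spherical varieties cases}).
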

\begin{proof}
See \cite{perri_intro_spherical}[Lemma 4.2.2].
\end{proof}

As a consequence we get the following proposition.
\begin{prop} \label{prop: admisible prop in spehrical case}
The correspondence $A \mapsto V_A$ defines a bijection between the following two sets
\begin{enumerate}[(a)]
    \item Admissible subalgebras $A$ such that for all $n \in \mathbb{N}$ the $k$-space of homogeneous polynomials of degree $n$ in $A$ has dimension at most $1$.
    \item Affine normal pointed $SL_2$-varieties $X = Spec(V_A)$ such that $\overline{B}\cdot p$ is dense.
\end{enumerate}
\end{prop}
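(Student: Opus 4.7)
The plan is simply to combine the two main results already in hand: the bijection of Lemma \ref{lemma: admissible subalgebras = pointed variaties with dense orbit} and the Borel density criterion of Lemma \ref{lemma: criterion spherical multiplicites}. All the real work has already been done, so this proposition reduces to matching up the two sphericality conditions under the bijection that is already established.

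First, I would recall the setup. The discussion opening Section 3 identifies affine normal pointed $SL_2$-varieties $(X,p)$ with dense orbit with $SL_2$-stable, normal, finitely generated subalgebras $R \subset k[SL_2]$, via $X = \Spec(R)$. Lemma \ref{lemma: admissible subalgebras = pointed variaties with dense orbit} then sets up the bijection $A \mapsto V_A$ between admissible subalgebras of $k[a,b]$ and such subalgebras $R$, with inverse $R \mapsto A_R = R^{\overline{U}} = R \cap k[a,b]$. So once we apply the correspondence $V_A \leftrightarrow A$, the only thing left to check is that the $\overline{B}$-density condition on $X = \Spec(V_A)$ translates precisely into the dimension-at-most-one condition on homogeneous pieces of $A$.

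Second, I would unwind the translation between the two conditions. Since $T$ acts with weight $1$ on both $a$ and $b$, the $T$-weight space of $k[a,b]$ of weight $n$ is exactly the space $k[a,b]_n$ of homogeneous polynomials of total degree $n$. As $A \subset k[a,b]$ is homogeneous (condition (a) of Definition \ref{defn: admissible subalgebra}), its $T$-weight space of weight $n$ is $A_n = A \cap k[a,b]_n$. Therefore the condition in (a) of the proposition — that $\dim_k A_n \leq 1$ for every $n \in \mathbb{N}$ — is exactly the condition that every $T$-weight space of $V_A^{\overline{U}} = A$ is at most one-dimensional.

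Third, I would invoke Lemma \ref{lemma: criterion spherical multiplicites}, which says that $\overline{B} \cdot p$ is dense in $X$ if and only if every $T$-weight space of $R^{\overline{U}} = V_A^{\overline{U}} = A$ is at most one-dimensional. Combining this equivalence with the translation of the previous paragraph completes the proof. There is no real obstacle here; the only point that needs any care is to notice the tautological matching between $T$-weights on $k[a,b]$ and the grading by total degree, so that the dimension-at-most-one criterion of (a) is literally the same as the weight-space hypothesis of Lemma \ref{lemma: criterion spherical multiplicites}.
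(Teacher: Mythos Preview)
Your proposal is correct and follows exactly the same approach as the paper, which simply cites Lemmas \ref{lemma: admissible subalgebras = pointed variaties with dense orbit} and \ref{lemma: criterion spherical multiplicites} as an immediate consequence. You have merely unpacked the identification of $T$-weight spaces of $A$ with homogeneous components of $A$ more explicitly than the paper does.
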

\begin{proof}
This is an immediate consequence of Lemma \ref{lemma: admissible subalgebras = pointed variaties with dense orbit} and Lemma \ref{lemma: criterion spherical multiplicites}.
\end{proof}

\begin{defn}
An affine normal $SL_2$-variety $X$ is called spherical if it has a $\overline{B}$-orbit that is dense.
\end{defn}

In order to classify spherical varieties, we will need a couple of lemmas. Recall that a one-parameter subgroup of $SL_2$ is a homomorphism of algebraic groups $\gamma: \mathbb{G}_m \rightarrow SL_2$. Let $\mathbb{G}_m \hookrightarrow \mathbb{A}^1$ be the usual open immersion. Given a morphism $\phi: \mathbb{G}_m \rightarrow X$ we say that $\text{lim}_{t \to 0} \, \phi(t)$ exists if the map $\phi$ extends to a morphism $\widetilde{\phi}: \mathbb{A}^1 \rightarrow X$. If this is the case, we will use the notation $\text{lim}_{t \to 0} \, \phi(t) = \widetilde{\phi}(0)$.

\begin{lemma} \label{lemma: hilbert mumford criterion}
Let $X$ be an affine $SL_2$-variety and let $p \in X(k)$. Suppose that the orbit $SL_2 \cdot p$ is open dense in $X$. For any closed point $x \in X(k) \setminus (SL_2(k) \cdot p)$, there exists a nonconstant one-parameter subgroup $\gamma : \mathbb{G}_m \rightarrow SL_2$ such that $\text{lim}_{t \to 0} \, \gamma(t) \cdot p$ is contained in the closure of the orbit $SL_2 \cdot x$.
\end{lemma}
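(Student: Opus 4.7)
The plan is to invoke the strengthened form of the Hilbert--Mumford criterion due to Birkes and Kempf: if a reductive group $G$ acts on an affine variety $X$ and $Y \subset X$ is a closed $G$-stable subvariety that meets the orbit closure $\overline{G \cdot p}$, then there exists a one-parameter subgroup $\gamma: \mathbb{G}_m \to G$ such that $\lim_{t \to 0} \gamma(t) \cdot p$ exists and lies in $Y$. We apply this with $G = SL_2$ and $Y \vcentcolon= \overline{SL_2 \cdot x}$, which is closed and $SL_2$-stable by construction.

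The hypothesis of the Birkes--Kempf theorem is automatic in our situation: since $SL_2 \cdot p$ is dense in $X$ we have $\overline{SL_2 \cdot p} = X$, so $Y$ meets $\overline{SL_2 \cdot p}$ trivially. The theorem then produces a one-parameter subgroup $\gamma : \mathbb{G}_m \to SL_2$ such that $\lim_{t \to 0}\gamma(t)\cdot p$ exists and lies in $\overline{SL_2 \cdot x}$, which is the desired conclusion.

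It remains to argue that $\gamma$ can be chosen nonconstant. Since any orbit of an algebraic group action is locally closed and $SL_2 \cdot p$ is dense, the orbit $SL_2 \cdot p$ is actually open in $X$. Its complement is a closed $SL_2$-stable subset of $X$ containing $x$, and hence also containing the closure $\overline{SL_2 \cdot x}$. In particular $p \notin \overline{SL_2 \cdot x}$. If $\gamma$ were the trivial one-parameter subgroup, then $\lim_{t \to 0} \gamma(t) \cdot p = p$ would lie in $\overline{SL_2 \cdot x}$, a contradiction. Thus $\gamma$ must be nonconstant.

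The only real content is the appeal to the Birkes--Kempf form of Hilbert--Mumford, i.e.\ the degeneration into a prescribed closed stable subset of the orbit closure, rather than merely into the unique closed orbit. If one prefers to use only the classical statement (``any point of an affine $G$-variety degenerates by a one-parameter subgroup to the unique closed orbit in its orbit closure''), one can instead argue: pick any closed $SL_2$-orbit $Z_0 \subset \overline{SL_2 \cdot x}$ (which exists by taking a minimal nonempty closed $SL_2$-stable subset of $\overline{SL_2 \cdot x}$), note that $Z_0$ must coincide with the unique closed orbit in $\overline{SL_2 \cdot p} = X$ by the standard separation-of-closed-orbits result for reductive actions, and then degenerate $p$ to a point of $Z_0 \subset \overline{SL_2 \cdot x}$ by the classical Hilbert--Mumford theorem. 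The nonconstancy of $\gamma$ then follows as above since $p \notin Z_0$.
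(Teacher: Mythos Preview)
Your proof is correct and follows essentially the same approach as the paper: both simply invoke the Hilbert--Mumford criterion, with the paper pointing to Mumford's GIT argument after an equivariant linear embedding, while you cite the Birkes--Kempf form (or the classical version plus the unique-closed-orbit observation) directly. Your write-up is in fact more explicit than the paper's one-line reference, in particular in justifying that $\gamma$ can be taken nonconstant via $p \notin \overline{SL_2 \cdot x}$.
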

\begin{proof}
This is a version of the Hilbert-Mumford criterion. It follows from a modification of the argument in \cite{mumford_git}[pg.53-54] after embedding $X$ into the total space of a linear representation of $SL_2$.
\end{proof}

Let $\gamma: \mathbb{G}_m \rightarrow SL_2$ be a nontrivial one-parameter subgroup. Recall that there is a Borel subgroup $B_{\gamma}$ associated to $\gamma$. It is defined by
\[ B_{\gamma} = \left\{ \, g \in SL_2 \, \, \mid \, \, \text{lim}_{t \to 0} \, \gamma(t)\,g \, \gamma(t)^{-1} \; \; \text{exists in $SL_2$}\, \right\} \]
The image $\text{Im} \, \gamma$ is a maximal torus inside $B_{\gamma}$. The dominant weights of $B_{\gamma}$ are the characters of $\text{Im} \, \gamma$ that pullback to a nonnegative weight of $\mathbb{G}_m$ under the homomorphism $\gamma$.

For the next lemma, we let $(X, p)$ be an affine pointed variety such that $SL_2 \cdot p$ is dense. As usual, we view the coordinate ring $R$ as a $SL_2$-stable subalgebra of $k[SL_2]$. Recall that we use the supercript $(-)^{op}$ to denote the right regular action.
\begin{lemma} \label{lemma: criterion for spherical classes of limits}
 Let $\gamma : \mathbb{G}_m \rightarrow SL_2$ be a non-constant one-parameter subgroup. Let $B_{\gamma}$ denote the corresponding Borel subgroup. Then the following statements are equivalent
 \begin{enumerate}[(1)]
 \item The limit $\text{lim}_{t \to 0}  \, \gamma(t) \cdot p$ exists in $X$
 \item The subalgebra $R \subset k[SL_2]$ is contained in the sum of those $T^{op}$-weight spaces of $k[SL_2]$ that are dominant for the Borel $B_{\gamma}^{op}$.
 \end{enumerate}
\end{lemma}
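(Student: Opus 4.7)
The plan is to reduce both conditions to statements about the right $\text{Im}(\gamma)$-weight decomposition of functions on $SL_2$, and then to use the $SL_2$-stability of $R$ under the left regular action in order to pass from evaluation at $\gamma(t)$ alone to vanishing on all of $SL_2$.

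First I would decompose $k[SL_2] = \bigoplus_{n \in \mathbb{Z}} V_n$ into weight spaces for the right action of the maximal torus $\text{Im}(\gamma)$, where $V_n = \{F \in k[SL_2] \mid F(g\, \gamma(t)) = t^n F(g) \text{ for all } g \in SL_2, t \in \mathbb{G}_m\}$. By the description of $B_\gamma$ recalled just before the lemma, a $T^{op}$-weight is dominant for $B_\gamma^{op}$ precisely when it pulls back to a non-negative character of $\mathbb{G}_m$ along $\gamma$, i.e.\ precisely when $n \geq 0$. Writing $F = \sum_n F_n$ and evaluating at $g = e$ yields $F(\gamma(t)) = \sum_n t^n F_n(e)$. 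In parallel, the limit in (1) exists in $X = \Spec R$ if and only if for every $F \in R$, viewed as a function on $SL_2$ via the inclusion $R \hookrightarrow k[SL_2]$, the regular function $t \mapsto F(\gamma(t))$ on $\mathbb{G}_m$ extends to $\mathbb{A}^1$, i.e.\ contains no strictly negative powers of $t$.

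With this dictionary, the implication (2) $\Rightarrow$ (1) is immediate: if $R \subset \bigoplus_{n \geq 0} V_n$, then for each $F \in R$ the expansion $F(\gamma(t)) = \sum_{n \geq 0} t^n F_n(e)$ is a polynomial in $t$. The converse is the only direction requiring a real idea. From (1) one only knows a priori that $F(\gamma(t))$ has no negative powers of $t$ for each $F \in R$, which is a condition on $F$ along a single curve. The key move is that $R$ is stable under the left regular $SL_2$-action, so $h \cdot F$ lies in $R$ for every $h \in SL_2$; applying (1) to $h \cdot F$ and using the identity $(h \cdot F)(\gamma(t)) = F(h^{-1} \gamma(t))$ gives that $F(h^{-1}\gamma(t))$ has no negative powers of $t$ for every $h \in SL_2$. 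Because the left and right regular actions commute, the right-weight decomposition is preserved by left translation, so one gets $F(h^{-1}\gamma(t)) = \sum_n t^n F_n(h^{-1})$ for every $h$; vanishing of the coefficient of $t^n$ for $n<0$ at every $h$ forces $F_n \equiv 0$ on $SL_2$, and hence $F \in \bigoplus_{n \geq 0} V_n$.

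I do not expect a serious technical obstacle. The one point that requires care is recognizing that the $SL_2$-stability of $R$ is essential in the converse: evaluation of $F$ along the single one-parameter subgroup $\gamma$ does not by itself determine its right weight components, and it is precisely the left translates $\{h\cdot F\}_{h \in SL_2}$ that provide enough test functions to promote the pointwise statement along $\gamma$ into the global weight statement on all of $SL_2$.
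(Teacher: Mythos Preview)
Your argument is correct and follows essentially the same route as the paper's proof: both decompose $k[SL_2]$ into right $\mathrm{Im}(\gamma)$-weight spaces and use the left $SL_2$-stability of $R$ to upgrade the extendability along the single curve $t \mapsto \gamma(t)\cdot p$ to the vanishing of all negative weight components. The only difference is packaging: the paper phrases the $(1)\Rightarrow(2)$ step geometrically, by building the $G$-equivariant map $\widetilde{\varphi}\colon G\times\mathbb{A}^1 \to X$, $(g,t)\mapsto g\cdot\widetilde{\varphi}_0(t)$ and reading off that $\varphi^*$ lands in $k[SL_2]\otimes k[T]$, whereas you do the equivalent computation function-theoretically by applying $(1)$ to each left translate $h\cdot F$.
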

\begin{proof}
Consider the composition
\begin{align*}
\varphi: \; \; SL_2 \times \mathbb{G}_m  \; \;\xrightarrow{m} \; \; SL_2  \; \;\longrightarrow \; \; X \\
(g,t) \; \; \; \; \mapsto \; \; \; gt \; \; \mapsto \; \; \; \; g \gamma(t) \cdot p
\end{align*}

At the level of coordinate rings, we have
\[\varphi^* : R \; \hookrightarrow \; k[SL_2] \; \xrightarrow{m^*} \; k[SL_2]\otimes k[T, T^{-1}]\]

Notice that $\mathbb{G}_m$ acts by multiplication on the right on $k[SL_2]$ via the homomorphism $\gamma$. The corresponding comodule map is $m^*$ above. If $f \in R$, we can break $f$ into a sum of weight vectors $f = \sum_{n \in \mathbb{Z}} f_{n}$ with $f_{n} \in k[Sl_2]$ having weight $n$. By definition we have
\[ \varphi^*(f) = \sum_{n \in \mathbb{Z}} f_{n} \otimes T^{n} \]

Assume that $\text{lim}_{t \to 0}\, \gamma(t) \cdot p$ exists. By definition, this means that the map $\varphi|_{\{\text{id}\}\times \mathbb{G}_m} : \mathbb{G}_m \rightarrow X$ extends to a map $\widetilde{\varphi}_0 : \mathbb{A}^1 \rightarrow X$. Therefore, we get a $G$-equivariant map
\begin{align*}
    \widetilde{\varphi} :\; \; G \times \mathbb{A}^1 \; \; \; \longrightarrow \; \; X \\
    (g,t) \; \; \; \mapsto \; \; \;  g \cdot \widetilde{\varphi}_0(t)
\end{align*}

By construction this map agrees with $\varphi$ on $G \times \mathbb{G}_m$. We conclude that the map $\varphi^*: R \rightarrow k[SL_2] \otimes k[T,T^{-1}]$ factors through the subring $k[SL_2] \otimes k[T] \subset k[SL_2] \otimes k[T,T^{-1}]$. This means that for all $f \in R$ we have $f_n = 0$ for $n <0$. In other words, $f$ belongs to the sum of weight spaces of the right regular representation $k[SL_2]$ that are dominant for $B_{\gamma}$.

Conversely, suppose that for any $f \in R$ we have $f_n = 0$ for all $n<0$. This implies that the morphism $\varphi: G \times \mathbb{G}_m \rightarrow X$ extends to to a morphism $\widetilde{\varphi}: G \times \mathbb{A}^1 \rightarrow X$. We can restrict $\widetilde{\varphi}$ to $\{id\} \times \mathbb{A}^1$ in order to deduce that the limit $\text{lim}_{t \to 0} \,  \gamma(t) \cdot p$ exists.
\end{proof}

The following is the main step in classifying affine spherical $SL_2$-varieties.
\begin{prop} \label{prop: classification of spherical varieties cases}
Let $X$ be an affine spherical $SL_2$-variety. Then $X$ is $SL_2$-isomorphic to one of the following
\begin{enumerate}[(a)]
    \item A homogeneous space $G/H$ for some closed algebraic subgroup $H \subset G$.
    
    \item $Spec\left(V_{k[b^f]}\right)$ for some positive integer $f$.
    
    \item $Spec\left(V_{k[ab]}\right)$.
    
    \item $Spec\left(V_{k[(ab)^2]}\right)$.
\end{enumerate}
\end{prop}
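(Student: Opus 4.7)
The plan is to split on whether $X$ is $SL_2$-homogeneous (case~(a)) and, in the non-homogeneous case, combine Hilbert--Mumford with the admissibility criterion to enumerate the three remaining cases.

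Suppose $X$ is not homogeneous and pick $x \in X(k) \setminus SL_2\cdot p$; Lemma~\ref{lemma: hilbert mumford criterion} then produces a non-trivial one-parameter subgroup $\gamma\colon \mathbb{G}_m\to SL_2$ with $\lim_{t\to 0}\gamma(t)\cdot p$ existing. Every non-trivial $\gamma$ is conjugate into $T$, and after a reparametrization $t \mapsto t^n$ (which does not affect the existence of the limit) together with a possible Weyl reflection on the right (which swaps $a$ and $b$ up to sign), we may take $\gamma(t) = \mathrm{diag}(t^{-1}, t)$, so $B_\gamma = \overline{B}$. Since $a$ has $T^{op}$-weight $-1$ and $b$ has $T^{op}$-weight $+1$, Lemma~\ref{lemma: criterion for spherical classes of limits} places $A = R^{\overline{U}}$ inside $\bigoplus_{j\geq i} k\cdot a^i b^j$. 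Sphericality (Lemma~\ref{lemma: criterion spherical multiplicites}) gives that each $A_n$ is at most one-dimensional, and normality forces the submonoid $S := \{n : A_n \neq 0\}$ of $\mathbb{N}$ to be saturated, so $S = d\mathbb{N}$ for some $d \geq 1$. Hence $A = k[p_d]$ for some single generator $p_d = \sum_{i=0}^c z_i\,a^i b^{d-i}$ with $c = \max\{i : z_i \neq 0\} \leq d/2$.

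Next I apply the admissibility criterion (Proposition~\ref{prop: criterion for admissible subalgebras}) to the pair $p_d, p_d$. The polynomial $w_1^{p_d, p_d}$ vanishes identically, but for $c \geq 1$ the leading-in-$a$ coefficient of $w_2^{p_d, p_d}$ is a nonzero multiple of $z_c^2$, so $w_2 \neq 0$; it must therefore lie in the one-dimensional line $A_{2d-4}$. This first requires $2d - 4 \in S$, i.e.\ $d \mid 4$, and matching the leading $a$-degree of $w_2$ (which is $2c-2$) with $i_{2d-4} = (2d-4)c/d$ (from additivity of $n \mapsto i_n$ on $S$) forces $d = 2c$. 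Together these leave only $(c, d) \in \{(1, 2), (2, 4)\}$ when $c \geq 1$; the case $c = 0$ gives $A = k[b^d]$ for any $d \geq 1$, which is case~(b) with $f = d$.

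For $(c, d) = (1, 2)$, $p_2 = z_0 b^2 + z_1 ab$ with $z_1 \neq 0$; replacing the base point $p$ by $u \cdot p$ for a suitable $u \in \overline{U}$ (which corresponds to a right $\overline{U}$-action on $R$, preserves the containment $A \subset \bigoplus_{j \geq i}$, and hence the Hilbert--Mumford setup) sends $p_2$ to $z_1\cdot ab$, so $A \cong k[ab]$, case~(c). For $(c, d) = (2, 4)$, imposing $w_2^{p_4, p_4} = c_0\cdot p_4$ term by term forces the relation $4 z_0 z_2 = z_1^2$, so that $p_4 = (\alpha b^2 + \beta ab)^2$ for some $\alpha, \beta$; a further $\overline{U}$-shift kills $\alpha$ and sends $p_4$ to $\beta^2(ab)^2$, so $A \cong k[(ab)^2]$, case~(d). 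The main obstacle is the admissibility step: identifying $w_2$ as the witness from which the two constraints $d \mid 4$ and $d = 2c$ both emerge, after which the $\overline{U}$-shifts cleanly monomialize $p_d$.
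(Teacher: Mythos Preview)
Your proof is correct and follows the same overall arc as the paper's (Hilbert--Mumford reduction to $B_\gamma = \overline{B}$, then $A = k[p_d]$ from normality plus multiplicity one, then Proposition~\ref{prop: criterion for admissible subalgebras}), but you extract the constraints differently. The paper first applies the criterion at depth $s = 2m$ (your $2c$): the element $w^{p,p}_{2m}$ is a nonzero scalar times $b^{2n-4m}$ by Lemma~\ref{lemma: first binomial sum lemma}, so either $n \neq 2m$ and $p$ is forced to be a power of $b$ immediately, or $n = 2m$, after which the subcases $m = 1, 2, >2$ are handled separately via $w_2$. You instead work with $w_2$ throughout: its nonvanishing for $c \geq 1$ (leading coefficient $y^{p,p}_{2c,2} = -2c(d-c)z_c^2/d^2(d-1)$) forces $2d-4 \in d\mathbb{N}$, hence $d \mid 4$, and comparing its leading $a$-exponent $2c-2$ with that of $p_d^{(2d-4)/d}$ forces $d = 2c$, so both constraints fall out of a single witness. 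Your treatment of the $(2,4)$ case is also a pleasant variant: rather than first shifting by $\overline{U}$ to kill $z_1$ and then using $w_2$ to force $z_0 = 0$ as the paper does, you read off the discriminant relation $z_1^2 = 4z_0z_2$ directly from $w_2 \in k\cdot p_4$ and recognise $p_4$ as the square of a degree-two form. The paper's route makes the $c < d/2$ case drop out instantly from $w_{2c}$; yours is more uniform but trades Lemma~\ref{lemma: first binomial sum lemma} for an explicit leading-coefficient computation.
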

\begin{proof}
Choose a point $p$ in the dense $\overline{B}$-orbit. This allows us to view $X$ as $Spec(R)$ for some $SL_2$-stable subalgebra $R \subset k[SL_2]$.

Assume that $X$ is not a homogeneous space. By the Hilbert-Mumford criterion (Lemma \ref{lemma: hilbert mumford criterion}) there exists a nontrivial one-parameter subgroup $\gamma: \mathbb{G}_m \rightarrow SL_2$ such that $\text{lim}_{t \to 0} \, \gamma(t)\cdot p$ exists. Let $B_{\gamma}$ be the corresponding Borel subgroup. Notice that right multiplying the algebra $R$ by an element $g \in SL_2(k)$ establishes an $SL_2$-isomorphism $Spec(R) \xrightarrow{\sim} Spec(R \cdot g)$. By Lemma \ref{lemma: criterion for spherical classes of limits}, we have that $\text{lim}_{t \to 0} \, g^{-1} \gamma(t) g\cdot p$ exists in $Spec(R\cdot g)$. The corresponding Borel subgroup will be $g^{-1} B_{\gamma} g$. By choosing an appropriate element $g \in SL_2(k)$ we can assume without loss of generality that $B_{\gamma} = \overline{B}$.

 Let $A = R \cap k[a,b]$ be the associated admissible subalgebra of $k[a,b]$. If $A$ is the field of constants $k$ then we have $R =k$. So the corresponding variety $X = Spec(k)$ is the homogeneous space $SL_2/SL_2$, which is not the case by assumption. Therefore $A$ contains some homogeneous polynomial with positive degree in $k[a,b]$. 
 
 Let $p$ denote a homogeneous polynomial of smallest positive degree $n$ in $A$. We claim that $A = k[p]$. Indeed let $S$ denote the set of homogeneous polynomials in $A$. It follows from the multiplicity condition $(2)$ in Lemma \ref{lemma: criterion spherical multiplicites} that all the weight spaces of the localization $A[S^{-1}]$ have dimension $1$. Let $l\mathbb{Z}\subset \mathbb{Z}$ denote the subgroup of weights of $A[S^{-1}]$, where $l$ is a positive integer. If $l=n$ then multiplicity one implies that $A[S^{-1}] = k[p^{\pm 1}]$. Therefore $A = k[p]$. If $l\neq n$ then $l$ is a proper divisor of $n$. Let $p'$ be an element of $A[S^{-1}]$ of weight $l$. By multiplicity one for the weight spaces, we must have $(p')^{\frac{n}{l}} \in k \,p$. Since $A$ is normal, this implies $p' \in A$. So we get a contradiction to the minimality of the degree $n$. This concludes the proof of the claim $A = k[p]$.
    
Suppose that $p = \sum_{i =0}^n z_i a^ib^{n-i}$. By Lemma \ref{lemma: criterion for spherical classes of limits}, $p$ must be contained in a sum of $\overline{B}^{op}$-dominant weight spaces. Concretely, this means that $z_i = 0$ whenever $n-i < i$. Let us denote by $m$ the largest index such that $z_m \neq 0$. We must have $m \leq \lfloor \frac{n}{2} \rfloor$.
    
Let's apply the criterion in Proposition \ref{prop: criterion for admissible subalgebras} to the pair $(p,p)$. For each $0 \leq s \leq n$, we want to determine whether the polymomial $w^{p,p}_{s}$ is in the subalgebra $A = k[p]$. Recall that
    \[ w^{p,p}_{s} = \sum_{\alpha=s}^{2n -s} y_{\alpha,s}^{p,p} \, a^{\alpha -s}\cdot b^{n_1+n_2-\alpha-s}\]
    with 
    \[y_{\alpha,s}^{p,p} = \sum_{i+j = \alpha} \sum_{e=0}^s (-1)^e \frac{\binom{s}{e}}{\binom{n}{e} \binom{n}{s-e}} \binom{n-i}{e}\binom{n-j}{s-e} z_iz_j\]
    
    Note that $y_{\alpha, s}^{p,p} = 0$ for all $\alpha > 2m$. This is because $z_i= 0$ whenever $i >m$. This means that for $s =2m$ we have a single term
    \[ w^{p,p}_{2m} = y_{2m,2m}^{p,p} \, b^{2n-4m}\]
    with
    \[y_{2m,2m}^{p,p} = 2 z_m^2\cdot \sum_{e=0}^{2m} (-1)^e \frac{\binom{2m}{e}}{\binom{n}{e} \binom{n}{2m-e}} \binom{n-m}{e}\binom{n-m}{2m-e}\]
We have $z_m \neq 0$ by assumption. Lemma \ref{lemma: first binomial sum lemma} (with $i = m$ and $j = n-m$) implies that $y_{2m,2m}^{p,p} \neq 0$. This means that $b^{2n-4m}$ must be a constant multiple of a power of $p$. We have four possible cases.
    \begin{enumerate}[(C1)]
        \item $n \neq 2m$. Then $b^{2n-4m}$ is not a scalar. This means that $p$ can be taken to be a power of $b$. So $A = k[b^f]$ for some positive integer $f$. Therefore $X$ satisfies $(b)$ in the proposition.
        
        \item $n = 2$ and $m=1$. Then up to multiplying $p$ by a scalar, we can write $p = ab + zb^2$ for some $z \in k$. Right multiplication by the element $\begin{bmatrix} 1 & 0 \\ -z & 1 \end{bmatrix}$ establishes a $SL_2$-isomorphism between $k[ab-zb^2]$ and the algebra $k[ab]$. This means that $X$ satisfies $(c)$ in the proposition.
        
        \item $n=4$ and $m=2$. Then $p$ can be chosen to be of the form $p = a^2b^2 +z_1ab^3 + z_0b^4$. We can right multiply by $\begin{bmatrix}1&0 \\-\frac{1}{2}z_1 & 1 \end{bmatrix}$ in order reduce to $z_1 = 0$. So $A = k[p] = k[a^2b^2+z_0b^4]$. We claim that this algebra is admissible only if $z_0 =0$. Indeed, the vector $w^{p,p}_2$ has a single nonzero summand in this case
        \[ w^{p,p}_2= y_{4,2}^{p,p} a^2b^2\]
        with
        \begin{gather*}
            y_{2,2}^{p,p} = \sum_{e=0}^2 (-1)^e \frac{\binom{2}{e}}{\binom{4}{e} \binom{4}{2-e}}\binom{2}{e}\binom{4}{2-e} z_0 + \sum_{e=0}^2 (-1)^e \frac{\binom{2}{e}}{\binom{4}{e} \binom{4}{2-e}}\binom{4}{e}\binom{2}{2-e} z_0
        \end{gather*}
        This simplifies to
        \[y_{2,2}^{p,p} = \frac{1}{3}z_0\]
        So we have $w^{p,p}_2 = \frac{1}{3}z_0a^2b^2$. This can't be in $A = k[p]$ unless $z_0 = 0$.
        
        We conclude that $A = k[(ab)^2]$. Therefore $X$ satisfies $(d)$ in the proposition.
        
         \item $n = 2m$ and $m > 2$. We will show that this case can't actually happen.
         
         We can write $p = z_m(ab)^m + \sum_{i =0}^{m-1}z_i a^i b^{2m-i}$ with $z_m \neq 0$. Let's look at the highest weight vector $w^{p,p}_2$. Notice that the degree of $w^{p,p}_2$ is $4m-4$. Since $m>2$, the only way $w^{p,p}_2$ can be a multiple of a power of $p$ is if $w^{p,p}_2 =0$ (by degree count). This means in particular that $y_{2m,2}^{p,p}$ must be $0$. Since $z_{i} = 0$ for $i>m$, the expression for $y_{2m,2}^{p,p}$ simplifies to
         \begin{gather*}
            y_{2m,2}^{p,p} = \sum_{e=0}^2 (-1)^e \frac{\binom{2}{e}}{\binom{2m}{e} \binom{2m}{2-e}} \binom{m}{e}\binom{m}{2-e} z_m^2
        \end{gather*}
         By Lemma \ref{lemma: second binomial sum lemma} with $i=0$, we have
        \[y_{2m,2}^{p,p} = z_{m}^2 \cdot \frac{-1}{2m-1}\]
        So we see that $y_{2m,2}^{p,p} = 0$ implies that $z_{m} = 0$, a contradiction.
        \end{enumerate}

\end{proof}

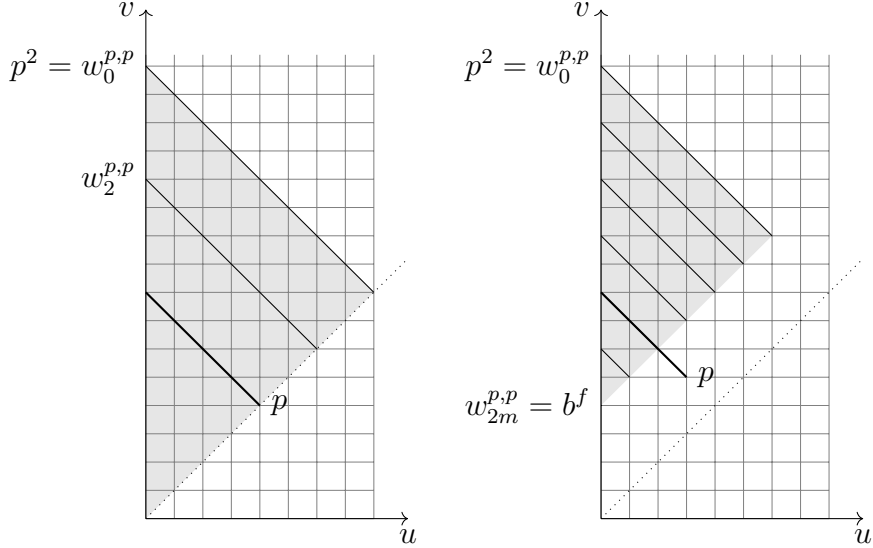
\begin{figure}[ht!]
\centering
\begin{subfigure}[b]{0.4\textwidth}

\begin{tikzpicture}[scale=1.5]

\fill[gray!20] (0,0) -- (2,2) -- (0,4) -- (0,0);
\draw[step=.25cm,gray,very thin] (0,0) grid (2,4.1);
\draw [->] (0,0) -- (2.3,0) node[anchor=north]{$u$};
\draw [->] (0,0) --  (0,4.5) node[anchor=east]{$v$};
\draw[dotted] (0,0) -- (2.3,2.3);

\draw [-,thick] (0,2) -- (1,1)    node[fill=none,anchor=west]{$p$};

\draw [-,] (2,2) -- (0,4)  node[fill=none,anchor=east]{$p^2 = w_0^{p,p}$};

\draw [-,] (1.5,1.5) -- (0,3)  node[fill=none,anchor=east]{$w_2^{p, p}$};

\end{tikzpicture}
\end{subfigure}
\begin{subfigure}[b]{0.4\textwidth}
\begin{tikzpicture}[scale=1.5]

\fill[gray!20] (0,1) -- (1.5,2.5) -- (0,4) -- (0,1);
\draw[step=.25cm,gray,very thin] (0,0) grid (2,4.1);
\draw [->] (0,0) -- (2.3,0) node[anchor=north]{$u$};
\draw [->] (0,0) --  (0,4.5) node[anchor=east]{$v$};
\draw[dotted] (0,0) -- (2.3,2.3);

\draw [-,thick] (0,2) -- (0.75,1.25)    node[fill=none,anchor=west]{$p$};

\draw [-,] (1.5,2.5) -- (0,4)  node[fill=none,anchor=east]{$p^2 = w_0^{p,p}$};
\draw [-,] (1.25,2.25) -- (0,3.5)  node[fill=none,anchor=east]{};
\draw [-,] (1,2) -- (0,3)  node[fill=none,anchor=east]{};
\draw [-,] (0.75,1.75) -- (0,2.5)  node[fill=none,anchor=east]{};
\draw [-,] (0.5,1.5) -- (0,2)  node[fill=none,anchor=east]{};
\draw [-,] (0.25,1.25) -- (0,1.5)  node[fill=none,anchor=east]{};
\draw [-,] (0,1) -- (0,1)  node[fill=none,anchor=east]{$w_{2m}^{p,p} = b^f$};

\end{tikzpicture}
\end{subfigure}
\caption{Application of the criterion in Proposition \ref{prop: criterion for admissible subalgebras} to $(p,p)$ in the proof of Proposition \ref{prop: classification of spherical varieties cases}. By Lemma \ref{lemma: criterion for spherical classes of limits}, $p$ lies above the dotted line as shown. Cases (C4) on the left and (C1) on the right.}
\end{figure}

The following proposition shows that cases $(b)$, $(c)$ and $(d)$ can actually arise.
\begin{prop} \label{prop: admisibility of spherical algebras}
For any positive integer $f$, the algebra $k[b^f]$ is admissible. The algebras $k[(ab)]$ and $k(ab)^2]$ are also admissible.
\end{prop}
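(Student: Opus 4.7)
The plan is to verify conditions (a), (b), (c) of Definition \ref{defn: admissible subalgebra} for each of the three algebras. Conditions (a) and (b) are immediate in all three cases: each algebra is generated by a single homogeneous element and is $k$-algebra isomorphic to $k[t]$, hence $T$-stable, finitely generated, and normal.

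For condition (c) I would apply the criterion of Proposition \ref{prop: criterion for admissible subalgebras}. The key observation is that every homogeneous component of $k[b^f]$, $k[ab]$, and $k[(ab)^2]$ is at most one-dimensional, spanned by a single monomial. So for any homogeneous pair $p_1, p_2 \in A$, the formula for $y^{p_1,p_2}_{\alpha, s}$ has only one nonzero term, concentrated at a single value of $\alpha$, and each $w^{p_1,p_2}_s$ collapses to a scalar multiple of a single monomial in $a, b$. For $k[b^f]$, the relevant $\alpha$ is $0$, which forces $s = 0$, and the surviving monomial is $b^{f(k_1+k_2)} \in A$. For $k[ab]$, the monomial is $(ab)^{k_1+k_2-s}$, which lies in $A$ for every $s$ in range. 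In both cases admissibility follows without any evaluation of the scalar coefficient.

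The case $A = k[(ab)^2]$ is the only one with real content: here $w^{p_1,p_2}_s$ is a scalar multiple of $(ab)^{2(k_1+k_2)-s}$, which lies in $A$ only for even $s$, so one must show the scalar $y^{p_1,p_2}_{2(k_1+k_2),s}$ vanishes for every odd $s$. I expect this to be the principal obstacle to a direct verification by binomial manipulation, and I would sidestep it by instead identifying $V_{k[(ab)^2]}$ with the fixed subring of $V_{k[ab]}$ under right multiplication by the Weyl element $w = \left(\begin{smallmatrix}0 & 1 \\ -1 & 0\end{smallmatrix}\right)$. Since $w^{op}$ commutes with the left $SL_2$-action and sends $ab \mapsto -ab$, it acts on each irreducible summand $V^{(ab)^k} \subset V_{k[ab]}$ by the scalar $(-1)^k$, yielding $(V_{k[ab]})^{w^{op}} = \bigoplus_{k \text{ even}} V^{(ab)^k} = V_{k[(ab)^2]}$. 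Since $V_{k[ab]}$ is a subring of $k[SL_2]$ by the previous case and $w^{op}$ is an algebra automorphism of $k[SL_2]$, its fixed subspace is automatically a subring, giving condition (c) for $k[(ab)^2]$.
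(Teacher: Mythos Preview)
Your proof is correct, and the route is genuinely different from the paper's. The paper does not use the criterion of Proposition \ref{prop: criterion for admissible subalgebras} at all here; instead it simply writes down explicit $SL_2$-stable subalgebras of $k[SL_2]$ and checks that their intersection with $k[a,b]$ is the given $A$: namely $V_{k[b^f]} = k[b^f, db^{f-1}, \ldots, d^f]$, $V_{k[ab]} = k[ab, ad, bc, cd]$, and $V_{k[(ab)^2]} = k[a^2b^2, ab(ad+bc), cd(ad+bc), c^2d^2]$. Your approach instead pushes the monomial formula for $w^{p_1,p_2}_s$ through directly for the first two algebras, and then, rather than facing the odd-$s$ vanishing for $k[(ab)^2]$, realizes $V_{k[(ab)^2]}$ as the $w^{op}$-fixed subring of the already-known algebra $V_{k[ab]}$. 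That Weyl-invariant trick is clean and avoids the ``one can check'' in the paper; on the other hand the paper's argument yields explicit generators of each $V_A$ as a byproduct, which it uses later (e.g.\ to see that $\mathrm{Spec}(R_\infty^{(2)}) \cong k[x,y,z]/(y^2-xz)$ is singular in the proof of Theorem \ref{thm: classification of affine spherical vars of sl2}). One minor point of phrasing: for $k[b^f]$ your sentence ``the relevant $\alpha$ is $0$, which forces $s=0$'' is correct but compressed---the point is that the only index with $z_{1,i}z_{2,j} \neq 0$ is $\alpha = 0$, which lies outside the summation range $s \le \alpha$ once $s \ge 1$, so $w^{p_1,p_2}_s = 0$ identically for $s \ge 1$.
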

\begin{proof}
Set $R_f = k[b^f, db^{f-1}, d^2b^{f-2}, ..., d^f]$ inside $k[SL_2]$. Note that $R_f$ is $SL_2$-stable. We have $k[b^f] = R_f \cap k[a,b]$. This shows that $V_{k[b^f]} = R_f$ in part $(b)$ of the previous proposition.

It can be checked that we have $k[ab] = k[ab,ad,bc,cd] \, \cap \, k[a,b]$. So in part $(c)$ we have $V_{k[ab]} =  k[ab,ad,bc,cd]$.

Finally, one can check in a similar way that $V_{k[(ab)^2]}$ is the subalgebra\\ $k[a^2b^2, ab(ad+bc), cd(ad+bc), c^2d^2]$ inside $k[SL_2]$.
\end{proof}

\begin{defn} \label{defn: Rf}
Let $f$ be a positive integer. We define $R_{\infty}^{(f)}$ to be the $SL_2$-stable subalgebra of $k[SL_2]$ generated by $k[b^f]$. More concretely, 
\[R_{\infty}^{(f)} = k[b^f, db^{f-1}, d^2b^{f-2}, ..., d^f]\]
Note that $R_{\infty}^{(f)}$ is the subalgebra of $(\mu_f \ltimes \overline{U})^{op}$-invariants $k[SL_2]^{(\mu_f \ltimes \overline{U})^{op}}$. Here $\mu_f \ltimes \overline{U} \subset T \ltimes \overline{U} = \overline{B}$.
\end{defn}

Now it remains to classify the affine spherical homogeneous spaces.
\begin{prop} \label{prop: classification of spherical homogeneous spaces}
Up to isomorphism, the affine spherical homogeneous spaces for $SL_2$ are
\begin{enumerate}[(a)]
    \item $SL_2/SL_2 = Spec(k)$.
    \item $SL_2/T$. This is isomorphic to $Spec\left(V_{k[ab]}\right)$.
    \item $SL_2/N_T$. This isomorphic to $Spec\left(V_{k[(ab)^2}\right)$.
\end{enumerate}
\end{prop}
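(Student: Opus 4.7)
The plan is to combine Matsushima's theorem with a direct classification of reductive subgroups of $SL_2$, then match the resulting homogeneous spaces with the claimed coordinate rings via a right-regular invariants computation.

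First, by Matsushima's theorem, $SL_2/H$ is affine if and only if $H$ is reductive. The reductive subgroups of $SL_2$ up to conjugation are easily enumerated: since the only connected reductive positive-dimensional subgroups are $T$ and $SL_2$, and $N_{SL_2}(T) = N_T$, the full list consists of $\{1\}$, non-trivial finite subgroups, $T$, $N_T$, and $SL_2$. Imposing sphericity rules out $H$ finite (including the trivial group), because then $\dim SL_2/H = 3 > 2 = \dim \overline{B}$, so no $\overline{B}$-orbit can be dense. For $H \in \{T, N_T\}$, a dimension count (choosing $g$ with $gTg^{-1}$ not contained in $\overline{B}$, so that $\overline{B} \cap gHg^{-1}$ is finite) produces a $\overline{B}$-orbit of dimension $2 = \dim SL_2/H$, which is therefore open dense. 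The case $H = SL_2$ is trivial.

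Next I would identify $SL_2/T$ with $\Spec V_{k[ab]}$ and $SL_2/N_T$ with $\Spec V_{k[(ab)^2]}$. Since $T$ and $N_T$ are reductive, we have $k[SL_2/H] = k[SL_2]^{H^{op}}$. By Proposition \ref{prop: admisibility of spherical algebras}, $V_{k[ab]} = k[ab, ad, bc, cd]$; the right-$T$-weights of $a, b, c, d$ are $-1, +1, -1, +1$, so these four monomials are $T^{op}$-invariant, and conversely any $T^{op}$-invariant monomial $a^i b^j c^k d^l$ in $k[SL_2]$ (one with $i+k = j+l$) factors as a product of them. Hence $V_{k[ab]} = k[SL_2]^{T^{op}}$. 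For $N_T$, additionally requiring invariance under a representative $w$ of the non-trivial Weyl coset (which sends $a \mapsto -b$, $b \mapsto a$, $c \mapsto -d$, $d \mapsto c$) yields $k[SL_2]^{N_T^{op}} = V_{k[(ab)^2]}$.

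The main obstacle will be the detailed verification of $k[SL_2]^{N_T^{op}} = V_{k[(ab)^2]}$: the $w$-invariance of the four stated generators is a direct check, but showing they generate the entire invariant ring requires more care. A clean approach uses Peter--Weyl: decompose $k[SL_2] = \bigoplus_{n \geq 0} V_n \otimes V_n^*$ as an $SL_2 \times SL_2^{op}$-module; the zero weight space $(V_n^*)^T$ is one-dimensional for $n$ even, and a Weyl element acts on it by $(-1)^{n/2}$, so $(V_n^*)^{N_T} \neq 0$ iff $4 \mid n$. Hence $k[SL_2]^{N_T^{op}} = \bigoplus_{4 \mid n} V_n$ as a left $SL_2$-module. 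This matches $V_{k[(ab)^2]}$: the latter is generated by the weight-$4$ highest vector $(ab)^2$, and Corollary \ref{coroll: explicit description of highest weight vectors for the product} (applied as in the proof of Proposition \ref{prop: admisibility of spherical algebras}) shows that its highest weight vectors occur precisely in weights divisible by $4$.
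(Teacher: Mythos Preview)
Your proof is correct and follows the same overall strategy as the paper: Matsushima's criterion plus a classification of reductive subgroups of $SL_2$, followed by an identification of the coordinate rings via right-$H$-invariants. The first part is essentially identical (you add an explicit verification that $SL_2/T$ and $SL_2/N_T$ are actually spherical, which the paper leaves implicit).

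The genuine difference is in the identification step. The paper exploits the correspondence of Lemma~\ref{lemma: admissible subalgebras = pointed variaties with dense orbit}: since $k[SL_2/H]^{\overline{U}} = (k[SL_2]^{H^{op}})^{\overline{U}} = k[a,b]^{H^{op}}$, one only needs to compute the right-$H$-invariants of the two-variable polynomial ring $k[a,b]$. For $H=T$ this is immediately $k[ab]$, and for $H=N_T$ the Weyl element sends $ab \mapsto -ab$, giving $k[(ab)^2]$; this is a one-line computation in each case. You instead compute the full invariant ring $k[SL_2]^{H^{op}}$ and match it against the explicit descriptions of $V_{k[ab]}$ and $V_{k[(ab)^2]}$ from Proposition~\ref{prop: admisibility of spherical algebras}, invoking Peter--Weyl for the $N_T$ case. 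Your route is more work but is self-contained and illustrates the representation-theoretic structure nicely; the paper's route is shorter because it leverages the admissible-subalgebra machinery already set up in Section~3.
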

\begin{proof}
By Matsushima's criterion \cite{richardson-affinehomegeneous} the homogeneous space $SL_2/H$ is affine if and only if $H$ is reductive. If $SL_2/H$ has a dense $\overline{B}$-orbit, then by dimension count we must have $\text{dim}\, H \geq 1$. By the classification of connected reductive groups, we must have $\text{dim} H = 1 \text{ or } 3$. If $\text{dim}\, H= 3$, then $H = SL_2$. If $\text{dim} \, H =1$, then the neutral component of $H$ must be a torus. By conjugating we can assume that the neutral component of $H$ is $T$. So we must have either $H = T$ or $H$ is the normalizer of the torus $N_T= \mathbb{Z}/2\mathbb{Z} \ltimes T$.

The explicit description of the algebras follows from taking the subalgebra of $H^{op}$-invariants of $k[a,b]$ in each case.
\end{proof}

\begin{thm} \label{thm: classification of affine spherical vars of sl2}
Every affine spherical $SL_2$-variety is $SL_2$-isomorphic to exactly one of the following
\begin{enumerate}[(a)]
    \item $SL_2/SL_2 = Spec(k)$.
    \item $SL_2/T$.
    \item $SL_2/N_T$.
    \item (See Definition \ref{defn: Rf}) \; $SL_2 /\!/ (\mu_f \ltimes \overline{U}) = Spec\left(R_{\infty}^{(f)}\right)$ for a unique positive integer $f$.
\end{enumerate}
\end{thm}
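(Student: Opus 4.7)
The plan is to assemble the theorem from the two preceding propositions, which have done nearly all of the work, and then verify that the resulting list is both exhaustive and redundancy-free. First I would invoke Proposition \ref{prop: classification of spherical varieties cases} to reduce to four cases: a homogeneous space, or one of $\text{Spec}(V_{k[b^f]})$, $\text{Spec}(V_{k[ab]})$, $\text{Spec}(V_{k[(ab)^2]})$. Then I would apply Proposition \ref{prop: classification of spherical homogeneous spaces} to enumerate the homogeneous cases as $SL_2/SL_2$, $SL_2/T \cong \text{Spec}(V_{k[ab]})$, and $SL_2/N_T \cong \text{Spec}(V_{k[(ab)^2]})$. Merging the two lists collapses the possibilities to exactly the four families (a)-(d), once I use Proposition \ref{prop: admisibility of spherical algebras} and Definition \ref{defn: Rf} to identify $\text{Spec}(V_{k[b^f]}) = \text{Spec}(R_{\infty}^{(f)}) = SL_2 /\!/ (\mu_f \ltimes \overline{U})$.

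What remains is uniqueness. I would argue first that (a)-(d) are pairwise distinct. The variety $SL_2/SL_2$ is zero-dimensional, so it is isolated from (b)-(d). The homogeneous spaces $SL_2/T$ and $SL_2/N_T$ have the same dimension but non-conjugate stabilizers (distinguished by the component group $N_T/T \cong \mathbb{Z}/2\mathbb{Z}$), so they are non-isomorphic. By contrast, no variety in family (d) is homogeneous: the stabilizer of the image of the identity coset in the map $SL_2 \to \text{Spec}(R_{\infty}^{(f)})$ equals $\mu_f \ltimes \overline{U}$, which is not reductive, so Matsushima's criterion rules out any affine realization of $SL_2/(\mu_f \ltimes \overline{U})$; hence family (d) is disjoint from (a)-(c). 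Finally, for $f \neq f'$ the corresponding stabilizers of the dense orbit have component groups $\mu_f$ and $\mu_{f'}$ of different orders, so they are non-conjugate in $SL_2$, and $\text{Spec}(R_{\infty}^{(f)}) \not\cong \text{Spec}(R_{\infty}^{(f')})$ as $SL_2$-varieties. In particular $f$ is recovered intrinsically from $X$ as the order of $\pi_0$ of any stabilizer of a point in the dense orbit.

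The main obstacle is really just bookkeeping, since the substantive work lies in Propositions \ref{prop: classification of spherical varieties cases}, \ref{prop: admisibility of spherical algebras}, and \ref{prop: classification of spherical homogeneous spaces}. The one mildly delicate point is verifying that the stabilizer of the generic point of $\text{Spec}(R_{\infty}^{(f)})$ under the left $SL_2$-action is \emph{exactly} $\mu_f \ltimes \overline{U}$ and not strictly larger: this follows because, by Definition \ref{defn: Rf}, the inclusion $R_{\infty}^{(f)} \hookrightarrow k[SL_2]$ exhibits $\text{Spec}(R_{\infty}^{(f)})$ as the invariant-theoretic quotient of $SL_2$ by the right action of $\mu_f \ltimes \overline{U}$, and on the open locus coming from $SL_2$ itself this is a geometric quotient, so the stabilizer for the left action is precisely the right-acting subgroup.
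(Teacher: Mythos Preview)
Your proof is correct, and the exhaustiveness half matches the paper's exactly: both combine Proposition~\ref{prop: classification of spherical varieties cases} with Proposition~\ref{prop: classification of spherical homogeneous spaces} and identify $V_{k[b^f]}$ with $R_\infty^{(f)}$ via Proposition~\ref{prop: admisibility of spherical algebras} and Definition~\ref{defn: Rf}.

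The uniqueness half, however, follows a genuinely different route. The paper distinguishes the varieties by comparing the algebras of $\overline{U}$-invariants $k[ab]$, $k[(ab)^2]$, $k[b^f]$ as $T$-representations; this separates everything except the coincidences $k[b^2]\cong k[ab]$ and $k[b^4]\cong k[(ab)^2]$ at the level of $T$-weights, and those two residual cases are then handled by observing that $\Spec(R_\infty^{(2)})$ and $\Spec(R_\infty^{(4)})$ are singular (they are cones over Veronese curves) while $SL_2/T$ and $SL_2/N_T$ are smooth. You instead use a single intrinsic invariant throughout, namely the conjugacy class of the stabilizer of a point in the dense orbit: Matsushima's criterion separates family~(d) from the homogeneous cases because $\mu_f\ltimes\overline{U}$ is not reductive, and the order of $\pi_0$ of the stabilizer then distinguishes the remaining cases from one another. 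Your approach is more uniform and arguably more conceptual; the paper's approach has the advantage of staying entirely within the representation-theoretic and toric framework already set up, without invoking Matsushima or computing generic stabilizers. Both are short once the preceding propositions are in hand.
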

\begin{proof}
The fact that this list is exhaustive follows from Proposition \ref{prop: classification of spherical varieties cases} and Proposition \ref{prop: classification of spherical homogeneous spaces}. We just need to show that no two of these spherical varieties are $SL_2$-isomorphic. $SL_2/SL_2$ is the only one that has dimension $0$, so it can't be isomorphic to the others.

In order to study the other ones, we can look at the algebras of $\overline{U}$-invariants. Recall by Proposition \ref{prop: admisibility of spherical algebras}
\begin{enumerate}[]
    \item $k[SL_2/T]^{\overline{U}} \cong k[ab]$.
    \item $k[SL_2/N_T]^{\overline{U}} \cong k[(ab)^2]$.
    \item $\left(R_{\infty}^{(f)}\right)^{\overline{U}} \cong k[b^f]$.
\end{enumerate}

These are isomorphisms as algebras with a left $T$-action. Note that most of these algebras are not isomorphic even as $T$-representations. In fact, the only ones that are isomorphic as $T$-representations are $k[b^2] \cong k[ab]$ and $k[b^4] \cong k[(ab)^2]$. We are therefore reduced to showing that $SL_2/T \ncong Spec\left(R_{\infty}^{(2)}\right)$ and $SL_2/N_T \ncong Spec\left(R_{\infty}^{(4)}\right)$ as $SL_2$-varieties.

$R_{\infty}^{(2)} = k[b^2,bd,d^2]$ is isomorphic as a ring to $k[x,y,z]/(y^2-xz)$. Note that this ring has a singularity at the origin $x=y=z=0$. So $Spec\left(R_{\infty}^{(2)}\right)$ is not smooth, and therefore it can't be isomorphic to $SL_2/T$.

A similar reasoning shows that $Spec\left(R_{\infty}^{(4)}\right)$ is not smooth, and so it can't possibly be isomorphic to $SL_2/N_T$.
\end{proof}

\begin{remark}
$Spec\left(R_{\infty}^{(f)}\right)$ will have a singular point for $f>1$. Indeed, $Spec\left(R_{\infty}^{(f)}\right)$ is isomorphic to the affine cone over the degree $f$ Veronese embedding of $\mathbb{P}^1$ into $\mathbb{P}^{f}$. For $f>1$, this will be an affine normal surface with a singular point (the cone point).
\end{remark}
\end{section}
\begin{section}{Classification of affine normal $SL_2$-varieties with an open dense orbit}
In this section we will apply similar techniques to classify all $SL_2$-isomorphism classes of affine normal $SL_2$-varieties with an open dense orbit. We start by dealing with the homogeneous ones.
\begin{prop} \label{prop: classification of homogeneous $SL_2$-vars}
Let $X = SL_2/H$ be an affine homogeneous $SL_2$-variety. Then up to $SL_2$-isomorphism, exactly one of the following holds
\begin{enumerate}[(a)]
    \item $X$ is spherical. We have either $H = T$, $H=N_T$ or $H = SL_2$.
    \item $(Type A)$ $H$ is the $f$-torsion subgroup $\mu_f \subset T$ for some positive integer $f$.
    \item $(Type D)$ $H$ is the subgroup $ \mathbb{Z}/2\mathbb{Z} \ltimes \mu_f$ inside $N_T$.
    \item $(E_6)$ $H \subset SL_2$ is the binary tetrahedral group $ \mathbb{T}$.
    \item $(E_7)$ $H \subset SL_2$ is the binary octahedral group $\mathbb{O}$.
    \item $(E_8)$ $H \subset SL_2$ is the binary icosahedral group $\mathbb{I}$.
\end{enumerate}
\end{prop}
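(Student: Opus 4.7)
The plan is to reduce the proposition to a purely group-theoretic classification of reductive subgroups $H \subset SL_2$ up to conjugation. Since $SL_2$-conjugate subgroups give rise to $SL_2$-isomorphic homogeneous spaces (via right translation), and conversely two quotients $SL_2/H_1$ and $SL_2/H_2$ that are $SL_2$-isomorphic force $H_1$ and $H_2$ to be conjugate, this reduction is faithful.

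First I would invoke Matsushima's criterion (cited earlier as \cite{richardson-affinehomegeneous}) to translate ``$SL_2/H$ is affine'' into ``$H$ is reductive.'' Then I would split into cases by $\dim H$. If $\dim H = 3$ then $H = SL_2$ because $SL_2$ is connected, giving part of case (a). A proper closed subgroup of $SL_2$ of dimension $2$ would be a Borel, which is not reductive, so no such $H$ contributes. If $\dim H = 1$ then $H^{\circ}$ is a connected reductive group of dimension $1$, hence a torus; by conjugacy of maximal tori in $SL_2$ I may assume $H^{\circ} = T$, and then $H \subseteq N_{SL_2}(H^{\circ}) = N_T$. Since $N_T/T \cong \mathbb{Z}/2\mathbb{Z}$, this yields $H = T$ or $H = N_T$, completing case (a).

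The remaining case is $\dim H = 0$, i.e.\ $H$ is finite. Any finite subgroup of $GL_n$ over a field of characteristic zero is linearly reductive (by Maschke), so every finite subgroup qualifies, and it suffices to classify finite subgroups of $SL_2$ up to conjugation. Here I would appeal to the classical Klein classification: via the double cover $SL_2 \twoheadrightarrow PGL_2$, a finite subgroup $H$ has image in $PGL_2$ acting faithfully on $\mathbb{P}^1$, and the finite subgroups of $PGL_2$ are (up to conjugation) cyclic, dihedral, or one of $A_4, S_4, A_5$ by the standard orbit-counting argument on fixed points. The preimages in $SL_2$ are then $\mu_f$, the binary dihedral groups $\mathbb{Z}/2\mathbb{Z} \ltimes \mu_f$, and the three binary polyhedral groups $\mathbb{T}, \mathbb{O}, \mathbb{I}$; after conjugating we may place $\mu_f$ inside $T$ and the binary dihedral group inside $N_T$.

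The main obstacle is the Klein classification itself, but it is a classical result I would cite rather than reprove. The remaining verification — that no two items in the list are conjugate — follows by inspection: the groups have distinct orders in most cases, and in the two ambiguous instances ($f = 1, 2$) the cyclic and binary dihedral families are distinguished by whether the subgroup is abelian and whether it lies in a torus versus meeting $N_T \setminus T$. Finally, conjugacy of any cyclic subgroup of order $f$ to the standard $\mu_f \subset T$ uses that finite-order elements of $SL_2$ in characteristic zero are semisimple (hence diagonalizable) with eigenvalues $\zeta, \zeta^{-1}$ determined up to the Weyl group action, which identifies all such subgroups. This completes the classification.
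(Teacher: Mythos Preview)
Your proposal is correct and follows essentially the same approach as the paper: Matsushima's criterion to force $H$ reductive, a case split on $\dim H$ (with the $\dim H \geq 1$ analysis matching the paper's Proposition~\ref{prop: classification of spherical homogeneous spaces} verbatim), and for $\dim H = 0$ an appeal to the classical Klein classification of finite subgroups of $SL_2$. The paper's proof is terser---it cites back to Proposition~\ref{prop: classification of spherical homogeneous spaces} and to Springer rather than spelling out the torus/normalizer and double-cover arguments---but the logic is identical.
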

\begin{proof}
If $\text{dim} \, H \geq 1$ then we are in case $(a)$ by the proof of Proposition \ref{prop: classification of spherical homogeneous spaces}. If $\text{dim}\, H = 0$, then $H$ is a finite subgroup of $SL_2$. We are reduced to classifying finite subgroups of $SL_2$ up to conjugation. Such classification is well known, see \cite{springer_invariant_thry}[4.4] for a treatment. The classification includes exactly the cases $(b)$ through $(f)$ above.
\end{proof}

For completeness, let's recall explicit descriptions of the exceptional cases $(d)$, $(e)$ and $(f)$ above. See \cite{slodowy_simple_sing}[\S 6.1] or \cite{springer_invariant_thry}[4.4] for details.
\begin{enumerate}
    \item[$(E_6)$] $\mathbb{T}$ is generated by the group $\mathbb{Z}/2\mathbb{Z} \ltimes \mu_2 \subset N_T$ and the matrix $\frac{1}{\sqrt{2}} \begin{bmatrix} \epsilon^7 & \epsilon^7 \\ \epsilon^5 & \epsilon \end{bmatrix}$. Here $\epsilon$ is a primitive $8$-th root of unity.
    \item[$(E_7)$] $\mathbb{O}$ is generated by $\mathbb{T}$ and the matrix $\begin{bmatrix} \epsilon & 0 \\ 0 & \epsilon^7 \end{bmatrix}$, where $\epsilon$ is the same $8$th root of unity as above.
    \item[$(E_8)$] $\mathbb{I}$ is generated by the matrices \;$- \begin{bmatrix} \eta^3 & 0 \\ 0 & \eta^2 \end{bmatrix}$ and \; $\frac{1}{\eta^2 - \eta^3} \begin{bmatrix} \eta + \eta^4 & 1 \\ 1 & -\eta -\eta^4 \end{bmatrix}$, where $\eta$ is a primitive $5$th root of unity.
\end{enumerate}

The admissible subalgebra $A \subset k[a,b]$ in each homogeneous case $SL/H$ can be computed as the ring of right invariants $k[a,b]^{H^{op}}$ (we take the distinguished point $p$ to be the identity). In Proposition \ref{prop: classification of spherical homogeneous spaces} we have already described $A$ when $\text{dim} \, H \geq  1$. See \cite{springer_invariant_thry}[Ch. 4] or \cite{dolgachev_mackay}[Ch. 1] for details on the computation of these algebras when $H$ is a finite group.

It remains to classify the nonhomogeneous $SL_2$-varieties. We will split the classification into two cases, depending on the dimension of $k[X]^{\overline{U}}$.

\begin{prop} \label{prop: general normal affine when dim 1}
Let $X$ be an affine normal $SL_2$-variety with an open dense orbit. Assume that $X$ is not a homogeneous $SL_2$-variety. Suppose that the ring $k[X]^{\overline{U}}$ has dimension 1. Then $X$ is spherical. In particular $X$ is as in part $(d)$ of Theorem \ref{thm: classification of affine spherical vars of sl2}.
\end{prop}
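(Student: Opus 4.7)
The plan is to deduce the result from Theorem \ref{thm: classification of affine spherical vars of sl2} by first showing that the hypothesis forces $X$ to be spherical. Choose a point in the dense orbit to present $X = \Spec(R)$ with $R \subset k[SL_2]$ a left $SL_2$-stable subalgebra, and set $A := R \cap k[a,b]$, the admissible subalgebra from Definition \ref{defn: admissible subalgebra}. Then $A$ is graded by total degree with $A_0 = k$, and by assumption it has Krull dimension $1$. By Lemma \ref{lemma: criterion spherical multiplicites}, it suffices to show that each graded piece $A_n$ is at most one-dimensional over $k$.

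The key elementary fact is that any two linearly independent homogeneous polynomials $p_1, p_2 \in k[a,b]$ of the same positive degree $n$ are algebraically independent over $k$. I would prove this by observing that any algebraic relation between $p_1$ and $p_2$ splits into its homogeneous parts (since $p_1, p_2$ are themselves homogeneous of the same degree), so it suffices to rule out a homogeneous relation $P(p_1,p_2) = 0$. Factoring $P(x,y) = \prod_i (\alpha_i x - \beta_i y)$ into linear forms over the algebraically closed field $k$, the relation becomes $\prod_i(\alpha_i p_1 - \beta_i p_2) = 0$ in the domain $k[a,b]$, so some factor vanishes, forcing a linear dependence between $p_1$ and $p_2$, a contradiction.

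Granted this, if some $A_n$ were at least two-dimensional over $k$, two linearly independent elements would be algebraically independent, giving $\text{trdeg}_k \text{Frac}(A) \geq 2$ and contradicting $\dim A = 1$. Thus every $A_n$ is at most one-dimensional, so $X$ is spherical by Lemma \ref{lemma: criterion spherical multiplicites}. Since $X$ is assumed not homogeneous, Theorem \ref{thm: classification of affine spherical vars of sl2} then leaves only case (d), giving $X \cong \Spec(R_{\infty}^{(f)})$ for a unique positive integer $f$.

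The only nontrivial step beyond invocations of earlier results is the algebraic independence lemma for pairs of homogeneous polynomials of equal degree, which is essentially the observation that the rational map $\mathbb{P}^1 \dashrightarrow \mathbb{P}^1$ defined by $[p_1 : p_2]$ is either constant or dominant; I do not anticipate any serious obstacle.
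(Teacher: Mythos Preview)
Your argument is correct, and it is genuinely different from the paper's. The paper proceeds by first invoking the Hilbert-Mumford criterion (Lemma~\ref{lemma: hilbert mumford criterion}) and Lemma~\ref{lemma: criterion for spherical classes of limits} to arrange that every homogeneous element of $A$ has its leading $a$-exponent at most half its degree, and then applies the admissibility criterion (Proposition~\ref{prop: criterion for admissible subalgebras}) together with an integrality argument over $k[b^f]$ to force $A = k[b^f]$ outright. Your route bypasses all of this machinery: you observe directly that two linearly independent forms of the same degree in $k[a,b]$ are algebraically independent (a clean homogeneous-factorization argument), so $\dim A = 1$ already forces each $A_n$ to have $k$-dimension at most one, and Lemma~\ref{lemma: criterion spherical multiplicites} (or rather Proposition~\ref{prop: admisible prop in spehrical case}) then gives sphericality.

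Your approach is shorter and more elementary for the stated goal; in particular it shows that the non-homogeneity hypothesis is irrelevant to the sphericality conclusion and is needed only to land in case~(d) at the end. The paper's approach, on the other hand, yields the stronger conclusion $A = k[b^f]$ directly (after adjusting the base point), without having to pass through the full classification in Theorem~\ref{thm: classification of affine spherical vars of sl2}; this is consistent with the paper's broader strategy of extracting structural information from the admissibility criterion.
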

\begin{proof}
Choose a closed point $p$ in the dense open orbit of $X$. This allows us to view $X$ as $Spec(R)$ for some left $SL_2$-subalgebra $R \subset k[SL_2]$. Since $X$ is not a homogeneous $SL_2$-variety, there is some closed point in the complement of the open dense orbit. By the Hilbert-Mumford criterion \ref{lemma: hilbert mumford criterion}, we know that there exists a nonconstant one-parameter subgroup $\gamma$ such that $\text{lim}_{t \to 0} \, \gamma(t) \cdot p$ exists in $Spec(R)$. After right multiplying by and element of $SL_2(k)$, we can assume that $B_{\gamma} = \overline{B}$.

Let $A = R^{\overline{U}}= R\cap k[a,b]$ be the corresponding admissible algebra. By assumption, $\text{dim}\, A = 1$. Therefore $A \neq k$. This means that there exist some homogeneous polynomial $p \in A$ with positive degree. If $k[p] = A$, then $X$ is spherical by Proposition \ref{prop: admisible prop in spehrical case}.

Assume that $A \neq k[p]$. We claim that $b^f \in A$ for some $f \geq 1$. Up to a scalar multiple, we can write $p = a^m b^{n-m} +\sum_{i=0}^{m-1}z_ia^ib^{n-i}$ for some $m \leq n$. Since $B_{\gamma} = \overline{B}$, we conclude by Lemma \ref{lemma: criterion for spherical classes of limits} that $m \leq \left\lfloor \frac{n}{2} \right\rfloor$. If $m \neq \frac{n}{2}$, then the argument in Proposition \ref{prop: classification of spherical varieties cases} Case (C1) shows that the element $w^{p,p}_{2m}$ is of the form $zb^f$ for some $z \in k \setminus \{0\}$ and $f \geq 1$. So we would be done with the claim in this case. Suppose therefore that $m = \frac{n}{2}$.

Since $k[p] \neq A$, there is a nonzero homogeneous polynomial $h \in A \setminus k[p]$. We can write $h = a^sb^{t-s} + \sum_{i=0}^{s-1} z'_i a^i b^{t-i}$. If $s \neq \frac{t}{2}$, then by the same reasoning as above we can conclude that $b^f \in A$ for some $f \geq 1$. Otherwise, we can look at the nonzero polynomial $p^t - h^n$. Notice that this is a homogeneous polynomial of degree $nt$. The highest power of the variable $a$ must be strictly smaller than $\frac{nt}{2}$ because we are cancelling the monomials with the highest $a$-exponents in $p^t$ and $h^m$. Therefore, we can just apply the argument of \ref{prop: classification of spherical varieties cases}, Case (C1) to deduce that $b^f \in A$ for some $f \geq 1$.

Let $f$ denote the minimal positive integer such that $b^f \in A$ (i.e. $k[b^f] = A \cap k[b]$). Since $\text{dim} \, A = 1$, the localization $A \otimes_{k[b^f]} k(b^f)$ must be integral over the field $k(b^f)$. Note that $A \otimes_{k[b^f]} k(b^f) \subset k(b)[a]$. The integral closure of the field $k(b^f)$ in the polynomial ring $k(b)[a]$ is just $k(b)$. Therefore we must have $A \subset k(b)$. This implies that $A = A \cap k[b] = k[b^f]$. It follows again that $X$ is spherical.
\end{proof}

We are left with the case of dimension 2. We start with a useful lemma.
\begin{lemma} \label{lemma: invariants of fraction field}
Let $X$ be an affine variety equipped with a left action of a unipotent group $U$. Then we have equality between $U$-invariants
$\text{Frac}(k[X]^U) = \text{Frac}(k[X])^{U}$.
\end{lemma}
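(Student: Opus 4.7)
The plan is to establish only the nontrivial inclusion $\text{Frac}(k[X])^U \subseteq \text{Frac}(k[X]^U)$; the reverse containment is immediate, since any quotient of two $U$-invariant regular functions is itself $U$-invariant. Fix $\phi \in \text{Frac}(k[X])^U$ and write $\phi = f/g$ for some $f,g \in k[X]$ with $g \neq 0$.

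First I would introduce the ``denominator ideal''
\[ I \; = \; \{\, h \in k[X] \mid h\phi \in k[X]\,\}. \]
It is nonzero because it contains $g$, and it is $U$-stable: for $h \in I$ and $u \in U(k)$, the $U$-invariance of $\phi$ gives $(u\cdot h)\phi = u\cdot(h\phi) \in k[X]$, so $u\cdot h \in I$. This reduces the problem to producing an invariant element inside $I$.

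Next I would invoke local finiteness of the $U$-action on $k[X]$: every element of $k[X]$ lies in a finite-dimensional $U$-subrepresentation, so $I$ is a union of finite-dimensional $U$-stable subspaces. Pick any nonzero finite-dimensional $U$-stable subspace $W \subseteq I$. Since $U$ is unipotent, the classical fixed-point theorem for unipotent groups acting on nonzero finite-dimensional vector spaces (a consequence of Lie--Kolchin) furnishes a nonzero $U$-invariant vector $h \in W$. Then $h \in k[X]^U$, $h\phi \in k[X]$, and $h\phi$ is also $U$-invariant as a product of two invariants. Hence $\phi = (h\phi)/h$ realises $\phi$ as a quotient of two elements of $k[X]^U$, which lies in $\text{Frac}(k[X]^U)$.

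The essential step — and the one that genuinely uses the unipotence hypothesis — is the extraction of a $U$-fixed element from the nonzero $U$-stable ideal $I$. Without unipotence (e.g.\ for a torus) the denominator ideal $I$ may contain no nonzero invariants, and the statement of the lemma can fail; so the main obstacle is simply to set up $I$ correctly so that this fixed-point input can be applied.
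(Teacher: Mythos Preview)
Your argument is correct and is the standard proof of this fact: form the $U$-stable denominator ideal, use local finiteness to find a finite-dimensional $U$-stable subspace, and extract a nonzero fixed vector via the unipotent fixed-point theorem. The paper does not supply its own proof here but merely cites Perrin's notes \cite{perri_intro_spherical}[4.1.16~(i)], where this same denominator-ideal argument is given.
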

\begin{proof}
See the argument in \cite{perri_intro_spherical}[4.1.16 (i)].
\end{proof}

For the following lemma, recall that the choice of a closed point $X$ in the open dense orbit allows us to view $X = Spec(R)$ for some left $SL_2$-stable subalgebra $R \subset k[SL_2]$.

\begin{lemma} \label{lemma: general normal affine when dim 2 is right T stable}
Let $X$ be an affine normal $SL_2$-variety with an open dense orbit. Assume that $X$ is not a homogeneous $SL_2$-variety. Suppose that the ring $k[X]^{\overline{U}}$ has dimension 2. Then $X$ admits a right $T$-action commuting with the $SL_2$-action.

More precisely, there exists a closed point $p \in X$ in the open dense orbit such that the corresponding $SL_2$-stable subalgebra $R \subset k[SL_2]$ is $T^{op}$-stable.
\end{lemma}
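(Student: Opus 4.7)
The plan is to combine Hilbert--Mumford with repeated applications of the admissibility criterion (Proposition \ref{prop: criterion for admissible subalgebras}), in the spirit of Propositions \ref{prop: classification of spherical varieties cases} and \ref{prop: general normal affine when dim 1}. The three steps are: (i) reduce to $B_\gamma=\overline{B}$ after a right-translation by $SL_2(k)$; (ii) produce a pure power $b^f\in A$ for some $f\geq 1$; (iii) show that after a further right-translation by $\overline{U}^{op}$ the subalgebra $A:=R\cap k[a,b]$ becomes generated by monomials, and hence $T^{op}$-stable. Since the right $T$-action commutes with the left $SL_2$-action, $T^{op}$-stability propagates from $A$ to $R=V_A$.

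For step (i), since $X$ is not homogeneous, Lemma \ref{lemma: hilbert mumford criterion} produces a nonconstant one-parameter subgroup $\gamma$ with $\lim_{t\to 0}\gamma(t)\cdot p$ existing in $X$, and right-translating $p$ places us in the case $B_\gamma=\overline{B}$; Lemma \ref{lemma: criterion for spherical classes of limits} then forces every homogeneous $q=\sum_i z_i a^i b^{n-i}\in A$ to satisfy $z_i=0$ for $i>n/2$. For step (ii), I would adapt the dim-$1$ argument of Proposition \ref{prop: general normal affine when dim 1}: if some homogeneous element of $A$ has top $a$-exponent strictly below half its degree, Case (C1) in the proof of Proposition \ref{prop: classification of spherical varieties cases} already produces a nonzero pure power of $b$; otherwise the hypothesis $\dim A=2$ lets me pick two algebraically independent homogeneous elements $p,h\in A$ (both necessarily with top $a$-exponent equal to half their degree), and then $p^{\deg h}-c\,h^{\deg p}$, for the unique constant $c$ cancelling the top $a$-monomial, is nonzero by algebraic independence and has top $a$-exponent strictly below half its degree, reducing to the first subcase.

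Step (iii) is the principal obstacle. The key input is Corollary \ref{coroll: explicit description of highest weight vectors for the product}: for each homogeneous $q\in A$ the admissibility polynomials $w^{q,b^f}_s$ (which must all lie in $A$) factor explicitly as nonzero scalar multiples of a power of $b$ times the partial derivative $\partial_a^s q$. Taking $s$ equal to the top $a$-exponent $m$ of $q$ forces a pure power of $b$ of degree $n+f-2m$ into $A$; combined with the normality identity $A\cap k[b]=k[b^f]$ (coming from the minimality of $f$), this produces the arithmetic constraint $f\mid n-2m$, which heavily restricts the $T^{op}$-weights that can appear in $A$. Running the same analysis for intermediate $s$ inductively pins down the top-$a$ coefficients of $q$, and the one-parameter $\overline{U}^{op}$-freedom $a\mapsto a+u b$ is then used to align any residual non-monomial terms so that each generator of $A$ becomes a sum of monomials from the appropriate semigroup. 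The technical difficulty is that, unlike in the commented birational case (where $b\in A$ directly gave the degree-preserving lowering operator $w^{q,b^2}_1=(b/n)\,\partial_a q$), here $w^{q,b^f}_1$ raises the $T$-degree by $f-2$, so the same-degree induction of the birational proof does not apply verbatim, and one must balance these degree shifts via multiplications by $b^f$ and the final $\overline{U}^{op}$-twist.
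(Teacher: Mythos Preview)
Your steps (i) and (ii) are correct and match the paper. Step (iii), however, has a genuine gap, and the paper fills it with an idea you are missing: the \emph{stabilizer argument}.

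After producing the minimal $f$ with $b^f\in A$, the paper observes that the stabilizer $H=\Stab_p$ of the distinguished point must fix $R$ (hence $A$) pointwise under the right action. Since $b^f$ is fixed, $H\subset \mu_f\ltimes\overline{U}$; since $\dim A=2$ forces $H$ to be finite, one conjugates by an element of $\overline{B}$ to arrange $H\subset T$, hence $H\subset\mu_f$. A short normality/fraction-field computation (Lemma \ref{lemma: invariants of fraction field}) then gives $H=\mu_f$ exactly, and therefore the \emph{a priori} containment $A\subset k[a,b]^{\mu_f^{op}}=k[a^f,b^f,ab]$. This is the key structural input: every homogeneous $p\in A$ already has its $T^{op}$-weight components spaced by $f$ (or $f/2$ when $f$ is even), so that the admissibility polynomial $w^{p,\,b^{2f}}_{f}$ has the \emph{same} $T$-degree $n$ as $p$ and lowers the top $a$-exponent by exactly $f$. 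This is what makes the descending induction close.

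Your plan tries to bypass the stabilizer by using $w^{q,b^f}_s$ and a final $\overline{U}^{op}$-twist, but two things go wrong. First, your operators $w^{q,b^f}_s$ shift the $T$-degree by $f-2s$, so (as you yourself note) the same-degree induction does not run; nothing in your outline explains how ``multiplications by $b^f$'' repair this. Second, and more seriously, the sentence ``the one-parameter $\overline{U}^{op}$-freedom is then used to align any residual non-monomial terms'' is the whole difficulty: you would need to show that a \emph{single} choice of $u$ in $a\mapsto a+ub$ simultaneously kills the sub-leading coefficients of \emph{every} generator of $A$. There is no argument for this in your proposal, and in fact the reason it is true is precisely that the correct $u$ is the one conjugating $H$ into $T$ --- i.e., the stabilizer argument you omitted. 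Without that conceptual input, the arithmetic constraint $f\mid n-2m$ you derive (which, incidentally, requires pairing with $b^{Nf}$ for large $N$, not $b^f$, since one needs $m\le s\le Nf$) controls only the top monomial of each $q$ and gives no relation among the sub-leading coefficients of different generators.
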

\begin{proof}
Let's start by choosing a closed point $p$ in the open dense orbit of $X$. Let $R$ be the $SL_2$-stable subalgebra of $k[SL_2]$ such that $X = Spec(R)$. Since $X$ is not a homogeneous $SL_2$-variety, there is some closed point in the complement of the open dense orbit. By the Hilbert-Mumford criterion \ref{lemma: hilbert mumford criterion}, we know that there exists a nonconstant one-parameter subgroup $\gamma$ such that $\text{lim}_{t \to 0} \, \gamma(t) \cdot p$ exists in $Spec(R)$. After right multiplying by and element of $SL_2(k)$, we can assume that $B_{\gamma} = \overline{B}$. 

Let $A = R^{\overline{U}} = R \cap k[a,b]$ be the corresponding admissible subalgebra of $k[a,b]$. We just need to show that $A$ is right $T$-stable. This is because the $T^{op}$-action commutes with the left $SL_2$-action. So if $A$ is $T^{op}$-stable, then its $SL_2$-span $R$ will also be $T^{op}$-stable. 

It suffices to prove that for every homogeneous polynomial $p \in A$, all of the $T^{op}$-weight components of $p$ are also in $A$.

We first claim that there exists some positive integer $f$ such that $b^f \in A$. Let $p \in A$ be a homogeneous polynomial of degree $n$. Up to a scalar multiple, we can write $p = a^m b^{n-m} +\sum_{i=0}^{m-1}z_ia^ib^{n-i}$ for some $m \leq n$. Since $B_{\gamma} = \overline{B}$, we conclude by Lemma \ref{lemma: criterion for spherical classes of limits} that $m \leq \left\lfloor \frac{n}{2} \right\rfloor$. If $m \neq \frac{n}{2}$, then the argument in Proposition \ref{prop: classification of spherical varieties cases} Case (C1) shows that the element $w^{p,p}_{2m}$ is of the form $zb^f$ for some $z \in k \setminus \{0\}$ and $f \geq 1$. So we would be done in this case. Suppose therefore that $m = \frac{n}{2}$. Consider the subalgebra $k[p] \subset A$. This inclusion can't be an equality because $A$ has dimension $2$ by assumption. This means that there is a nonzero homogeneous polynomial $h \in A \setminus k[p]$. We can write $h = a^sb^{t-s} + \sum_{i=0}^{s-1} z'_i a^i b^{t-i}$. If $s \neq \frac{t}{2}$, then by the same reasoning as above we can conclude that $b^f \in A$ for some $f \geq 1$. Otherwise, we can look at the nonzero polynomial $p^t - h^n$. Notice that this is a homogeneous polynomial of degree $nt$. The highest power of the variable $a$ must be strictly smaller than $\frac{nt}{2}$ because we are cancelling the monomials with the highest $a$-exponents in $p^t$ and $h^m$. Therefore, we can just apply the argument of \ref{prop: classification of spherical varieties cases}, Case (C1) to deduce that $b^f \in A$ for some $f \geq 1$.

Now let $f$ be the smallest positive integer such that $b^f \in A$. Let $\text{Stab}_p$ denote the stabilizer of the distinguished point $p$. Note that the algebra $R$ must be invariant under (i.e. pointwise fixed by) the right action of $\text{Stab}_p$. This in particular implies that $A$ is invariant under the right action of $\text{Stab}_p$. Since $b^f \in A$, this means that $b^f$ is fixed by the right action of $\text{Stab}_p$. It can be checked that the right stabilizer of $b^f$ is the subgroup $\mu_f \ltimes \overline{U}$ in $\overline{B} = T \ltimes \overline{U}$. Note that we must have $\text{dim} \, \text{Stab} = 0$. Otherwise $ \overline{U} \subset \text{Stab}_p$, and so we would have 
\[A \subset k[a,b]^{\text{Stab}_p^{op}} \subset k[a,b]^{\overline{U}^{op}} = k[b] \]
This is a contradiction to the assumption that $\text{dim}\, A = 2$. Therefore $\text{Stab}_p$ is a finite subgroup of $\mu_f \ltimes \overline{U}$. We can use an element of the connected group $\overline{B}$ to conjugate $\text{Stab}_p$ to a subgroup of $T$. So after right multiplying our algebra $R$ by an element of $\overline{B}$, we can assume without loss of generality that $\text{Stab}_p$ is a finite subgroup of $T$. We still know that $\text{Stab}_p$ fixes $b^f$, and therefore $\text{Stab}_p \subset \mu_f$.

We claim that in fact $\text{Stab}_p = \mu_f$. Say $\text{Stab}_p = \mu_e \subset \mu_f$ for some $e$ dividing $f$. Note that $Spec(R)$ contains $SL_2/\text{Stab}_p$ as an open subscheme. The variety $SL_2/\text{Stab}_p$ is $Spec$ of the algebra of right $\mu_e$-invariants $k[SL_2]^{\mu_e^{op}}$. Now we can use Lemma \ref{lemma: invariants of fraction field} in order to get the following chain of equalities.
\[ \text{Frac}(A) = \text{Frac}(R^{\overline{U}}) = \text{Frac}(R)^{\overline{U}} = \text{Frac}\left(k[SL_2]^{\mu_e^{op}}\right)^{\overline{U}} = \text{Frac}\left( (k[SL_2]^{\mu_e^{op}})^{\overline{U}}\right)\]
Since the left and right $SL_2$-actions commute, we can exchange the order of taking right and left invariants. This way we get
\begin{gather*}
 \text{Frac}(A) = \text{Frac}\left( (k[SL_2]^{\overline{U}})^{\mu_e^{op}}\right) = \text{Frac}\left(k[a,b]^{\mu_e^{op}} \right) = \text{Frac}(k[a^e, b^e, ab])
\end{gather*}
In particular $b^e \in \text{Frac}(A)$. Since $A$ is normal and $b^f \in A$, we must have $b^e \in A$. By minimality of $f$ we have $e=f$. This concludes the proof that $\text{Stab}_p = \mu_f$. Since $A$ is fixed by the right action of $\text{Stab}_p$, we must have $A \subset k[a,b]^{\mu_f^{op}} =  k[a^f,b^f, ab]$.

We are now ready to finish the proof of the proposition. Let $p$ be a homogeneous polynomial in $A$. We will distinguish two cases.

\begin{enumerate}[(C1)]
    \item $f$ is odd.
    Since $A \subset k[a^f, b^f,ab]$, we can write 
    \[p = a^{m} b^{n-m} +\sum_{i=1}^{\left\lfloor \frac{m}{f} \right\rfloor}z_i \,a^{m-if} \, b^{n-m+if}\]
    for some $z_i \in k$. The $T^{op}$-weight components of $p$ are all the monomial summands $z_i a^{m-if}b^{n-m+if}$ and $a^{m}b^{n-m}$ in the sum above. We claim that we actually have $a^{m-if} b^{n-m+if} \in A$ for all $0 \leq  i \leq \left\lfloor \frac{m}{f} \right\rfloor$. This would imply that all $T^{op}$-weight components are in $A$, thus concluding our proof. 

Let's prove this last claim. We will assume without loss of generality that $m \geq f$, otherwise the claim is obvious. Let us set $p_2 \vcentcolon = b^{2f}$. We know that $b^{2f} \in A$ by our work above. Let's look at the highest weight vector
\[ w^{p,p_2}_{f} = \sum_{\alpha =f}^{n+f} y_{\alpha,f}^{p,p_2} \, a^{\alpha-f} b^{n+f-\alpha} \]
Notice that $y_{\alpha,f}^{p,p_2} = 0$ for all $\alpha > m$. This is because in the sum for $y_{\alpha,f}^{p,p_2}$ there are no notrivial pairs $(i,j)$ with $i+j >m$. So we actually have
\[ w^{p,p_2}_{f} = \sum_{\alpha =f}^{m} y_{\alpha,f}^{p,p_2} \, a^{\alpha-f}b^{n+f-\alpha} \]
The coefficient $y_{m,f}^{p,p_2}$ can be checked to be
\[y_{m,f}^{p,p_2} = \sum_{e=0}^f (-1)^e \frac{\binom{f}{e}}{\binom{n}{e} \binom{2f}{f-e}} \binom{n-m}{e} \binom{2f}{f-e}\]
This can be simplified to
\[ y_{m,f}^{p,p_2} = \sum_{e=0}^f (-1)^e \frac{\binom{f}{e} \binom{n-m}{e}}{\binom{n}{e} }  \]
By Lemma \ref{lemma: third binomial sum lemma}, we have $y_{m,f}^{p,p_2} \neq 0$. Therefore, we can divide by $y_{m,f}^{p,p_2}$ and write
\[\frac{1}{y_{m,f}^{p,p_2}} w_f^{p,p_2} = a^{m-f}b^{n-m+f} + \sum_{i=2}^{\left\lfloor \frac{m}{f} \right\rfloor}\frac{y_{m-i+1,f}^{p,p_2}}{y_{m,f}^{p,p_2}} a^{m-if} b^{n-m+if}\]
Let us call $q_0 \vcentcolon = p$ and $q_{1} \vcentcolon = \frac{1}{y_{m,f}^{p,p_2}} w_f^{p,p_2}$. For every $0 \leq j \leq m$, we can define recursively $q_j \vcentcolon = \frac{1}{y_{m-j+1,f}^{q_{j-1}, \, p_2}} w^{q_{j-1}, \,p_2}$. The computation above shows that we can write
\[ q_j = a^{m-jf}b^{n-m+jf} + \sum_{i=j+1}^{\left\lfloor \frac{m}{f} \right\rfloor} z_{j,i} a^{m-if} b^{n-m+if}\]
for some coefficients $z_{j,i} \in k$. By assumption $A$ is admissible, so $q_j \in A$ for all $j$.
We will now show by descending induction that $a^{m-if}b^{n-m+if} \in A$ for all $0 \leq i \leq \left\lfloor \frac{m}{f} \right\rfloor$. For the base case we have $q_{\left\lfloor \frac{m}{f} \right\rfloor} = b^{n-m+ \left\lfloor \frac{m}{f} \right\rfloor f} \in A$. Let $0 \leq j \leq \left\lfloor \frac{m}{f} \right\rfloor$. Assume by induction that we know that $a^{m-if}b^{n-m +if} \in A$ for all $j+1\leq i \leq \left\lfloor \frac{m}{f} \right\rfloor$. Then, we can write the monomial $a^{m-jf} b^{n-m+jf}$ as
\[ a^{m-jf} b^{n-m+jf} = q_{j} - \sum_{i=j+1}^{\left\lfloor \frac{m}{f} \right\rfloor} z_{j,i} a^{m-if} b^{n-m+if}\]
Since all of the summands in the right-hand side are in $A$, we conclude that $a^{m-jf}b^{n-m+jf} \in A$. So we are done by induction.
    
    \item $f$ is even.
     Since $A \subset k[a^f, b^f,ab]$, we can write 
    \[p = a^{m} b^{n-m} +\sum_{i=1}^{\left\lfloor \frac{2m}{f} \right\rfloor}z_i \,a^{m-i\frac{f}{2}} \, b^{n-m+i\frac{f}{2}}\]
    for some $z_i \in k$. The $T^{op}$-weight components of $p$ are all the monomial summands $z_i a^{m-i\frac{f}{2}}b^{n-m+i\frac{f}{2}}$ and $a^{m}b^{n-m}$ in the sum above. We can do the same argument as in case $(C1)$ but replacing every instance of $f$ by $\frac{f}{2}$ (e.g. $p_2 =b^f$ and we look at the highest weight vector $w^{p,p_2}_{\frac{f}{2}}$).
    
\end{enumerate}
\end{proof}

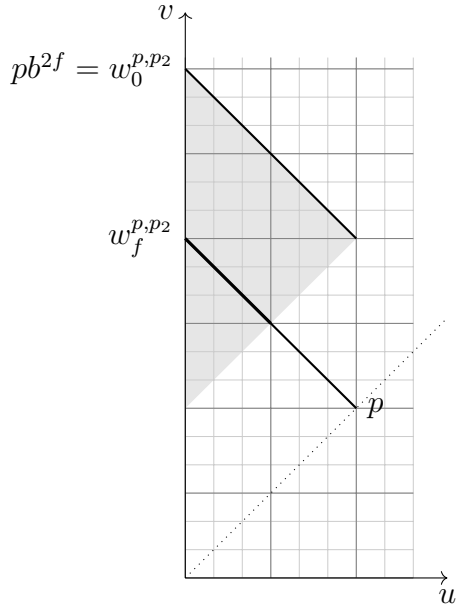
\begin{figure}[ht!]
\centering

\begin{tikzpicture}[scale=1.5]

\fill[gray!20] (0,1.5) -- (1.5,3) -- (0,4.5) -- (0,1.5);

\draw[step=.25cm,gray!40,very thin] (0,0) grid (2,4.6);
\draw[step=.75cm,gray,very thin] (0,0) grid (2,4.6);

\draw [->] (0,0) -- (2.3,0) node[anchor=north]{$u$};
\draw [->] (0,0) --  (0,5.0) node[anchor=east]{$v$};
\draw[dotted] (0,0) -- (2.3,2.3);

\draw [-,thick] (0,3) -- (1.5,1.5)    node[fill=none,anchor=west]{$p$};

\draw [-,thick] (1.5,3) -- (0,4.5)  node[fill=none,anchor=east]{$p b^{2f} = w_0^{p,p_2}$};
\draw [-,very thick] (0.75, 2.25) -- (0,3)  node[fill=none,anchor=east]{$w_f^{p,p_2}$};

\end{tikzpicture}

\caption{Application of the criterion in Proposition \ref{prop: criterion for admissible subalgebras} to $(p,b^{2f})$ in the proof of Lemma \ref{lemma: general normal affine when dim 2 is right T stable}, case (C1). The darker grid represents the sublattice induced by $f = 3$.}
\end{figure}

\begin{defn} \label{defn: Sq}
Let $q\geq 1$ be a rational number. We define $S_q$ to be the subalgebra of $k[a,b]$ generated by all monomials $a^i b^j$ satisfying $j \geq qi$. We will denote by $V_{S_q}$ the left $SL_2$-subrepresentation of $k[SL_2]$ spanned by $S_q$.
\end{defn} 

\begin{lemma} \label{lemma: Sq are admissible}
The algebra $S_q$ is admissible in the sense of Definition \ref{defn: admissible subalgebra}.
\end{lemma}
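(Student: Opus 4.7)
The plan is to verify the three conditions of Definition \ref{defn: admissible subalgebra} in turn. Conditions (a) and (b) follow quickly from the toric description of $S_q$. Since $S_q$ is generated by monomials in $a$ and $b$, it is automatically homogeneous for the grading by total degree, giving (a). For (b), identifying $S_q$ with the semigroup algebra $k[\Sigma_q]$ of $\Sigma_q = \mathbb{N}^2 \cap (\mathbb{R}^{\geq 0}(0,1) + \mathbb{R}^{\geq 0}(1,q))$ exhibits $\Sigma_q$ as a finitely generated saturated submonoid of $\mathbb{Z}^2$, so by Gordan's lemma and the standard identification of saturated affine semigroup algebras with coordinate rings of normal affine toric varieties (cf.\ \cite{cox_toric}), $k[\Sigma_q]$ is finitely generated and normal.

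For condition (c) I will apply the admissibility criterion of Proposition \ref{prop: criterion for admissible subalgebras}. The key observation is that for fixed total degrees $n_1, n_2$, the expression $w^{p_1,p_2}_s$ is bilinear in the coefficient vectors $(z_{1,i})$ and $(z_{2,j})$ describing $p_1$ and $p_2$, as is visible from the explicit formula. Since $S_q$ is $k$-spanned by its monomials, any pair of homogeneous elements $p_1, p_2 \in S_q$ decomposes as a $k$-linear combination of such monomials, and bilinearity reduces the verification of (c) to the case where $p_1 = a^i b^j$ and $p_2 = a^l b^f$ are monomials with $j \geq qi$ and $f \geq ql$.

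For such a pair of monomials, only the single index pair $(i,l)$ contributes to the coefficients $y_{\alpha,s}^{p_1,p_2}$, forcing $\alpha = i+l$, so $w^{p_1,p_2}_s$ is either zero (when $s$ is outside the admissible range) or a scalar multiple of the single monomial $a^{i+l-s}b^{j+f-s}$. The task then reduces to verifying that this monomial lies in $S_q$ whenever its exponents are non-negative, i.e.\ that $j+f-s \geq q(i+l-s)$. Combining $j \geq qi$, $f \geq ql$ and $-s \geq -qs$ yields the desired inequality, where the last ingredient is precisely the hypothesis $q \geq 1$. I do not anticipate any real obstacle here; the only conceptual step is the bilinear reduction to monomials, after which the assumption $q \geq 1$ accounts exactly for the subtraction of $s$ from both exponents.
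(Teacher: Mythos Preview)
Your proposal is correct and follows essentially the same route as the paper's proof: toric/semigroup arguments for (a) and (b), then the criterion of Proposition~\ref{prop: criterion for admissible subalgebras} reduced by bilinearity to monomial inputs, with the inequality $j+f-s \geq q(i+l-s)$ verified via $j\geq qi$, $f\geq ql$, and $-s\geq -qs$. The only difference is cosmetic: you spell out the bilinearity justification for the reduction to monomials, whereas the paper simply asserts that this reduction suffices.
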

\begin{proof}
 $Spec(S_q)$ is a toric variety for the torus $(T \times T^{op})/\mu_2$, where we are embbeding $\mu_2$ diagonally. Motivated by this, we will use the notation for toric varieties as in 
\cite{cox_toric}.

Let $\Sigma_q$ be the semigroup in $\mathbb{N}^2 \subset \mathbb{Z}^2$ defined by
\[ \Sigma_q = \left\{ (i,j) \in \mathbb{N}^2 \; \mid \; a^ib^j \in S_q\right\} \]
Equivalently 
\[\Sigma_q = \left\{ (i,j) \; \mid \; j \geq qi \right\}\]
Notice that this semigroup is saturated in $\mathbb{Z}^2$. Actually it corresponds to the rational cone $\mathbb{R}^{\geq 0} (0,1) + \mathbb{R}^{\geq 0}(1, q)$ inside the vector space $\mathbb{R}^2 = \mathbb{Z}^2 \otimes \mathbb{R}$. By \cite{cox_toric}[Thm. 1.3.5] and \cite{cox_toric}[Prop. 1.2.17], the algebra $S_q = k[\Sigma_q]$ is normal and finitely generated.

In order to prove condition $(c)$ in the definition of admissible algebra, we can use Proposition \ref{prop: criterion for admissible subalgebras}. In this case it suffices to check the condition when $p_1, p_2$ are monomials in $a,b$. Let $p_1 = a^ib^j$ and $p_2 = a^lb^f$ be monomials in $S_q$. We need to look at
\[ w^{p_1,p_2}_{s} = \sum_{\alpha = s}^{i+j+l+f-s} y_{\alpha,s}^{p_1,p_2} a^{\alpha-s}b^{i+j+l+f-\alpha-s}\]
Notice that in this case $y_{\alpha,s} = 0$ unless $\alpha = i+l$, because there is only one nontrivial coefficient in both $p_1$ and $p_2$. So
\[ w^{p_1,p_2}_{s} = y_{i+l,s}^{p_1,p_2}\, a^{i+l-s} \,b^{j+f-s}\]
In order to show that this is in $S_q$, it suffices to show that $j+f-s \geq q \cdot(i+l-s)$. This follows from combining the inequalities $j \geq qi$, $f \geq ql$ and $-s \geq -qs$ (the last one because $q \geq 1$).
\end{proof}

\begin{defn} \label{defn: S_q^f}
For any positive integer $f$ and any rational number $q \geq 1$, we will denote by $S_q^{(f)}$ the subalgebra of $k[SL_2]$ defined by $S_q^{(f)} \vcentcolon = S_q \cap k[a^f, b^f,ab]$. We will denote by $R_{q}^{(f)}$ the left $SL_2$-subrepresentation of $k[SL_2]$ spanned by $S_q^{(f)}$.
\end{defn}

\begin{lemma} \label{lemma: the algebras S_q^f are admissible}
\begin{enumerate}[(a)]
    \item For every positive integer $f$ and rational number $q \geq 1$, the algebra $S_q^{(f)}$ is admissible. In particular $R^{(f)}_q$ is a subalgebra of $k[SL_2]$.
    \item Let $f_1,f_2$ be positive integers and let $q_1,q_2 \geq 1$ be rational numbers. If $f_1 \neq f_2$ or $q_1 \neq q_2$, then the varieties $Spec\left(R_{q_1}^{(f_1)}\right)$ and $Spec\left(R_{q_2}^{(f_2)}\right)$ are not $SL_2$-isomorphic.
\end{enumerate}
\end{lemma}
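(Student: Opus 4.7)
For part (a), my plan is to verify the three conditions of Definition \ref{defn: admissible subalgebra} directly. Homogeneity (condition (a)) is immediate since $S_q^{(f)} = S_q \cap k[a^f,b^f,ab]$ is an intersection of $T$-stable monomial subalgebras of $k[a,b]$. For normality and finite generation (condition (b)), I would note that $S_q^{(f)}$ is the monomial subalgebra spanned by the semigroup $\Sigma_q^{(f)} = \{(i,j) \in \mathbb{N}^2 : j \geq qi,\; j-i \equiv 0 \pmod f\}$, which is the intersection of the rational cone $C = \mathbb{R}^{\geq 0}(0,1)+\mathbb{R}^{\geq 0}(1,q)$ with the sublattice $L = \{(i,j) \in \mathbb{Z}^2 : j - i \equiv 0 \pmod f\}$. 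This intersection is saturated and finitely generated, so normality and finite generation follow from the same toric references as in Lemma \ref{lemma: Sq are admissible} (Gordan's lemma and \cite{cox_toric}[Thm. 1.3.5]). For condition (c), I would invoke Proposition \ref{prop: criterion for admissible subalgebras}: by bilinearity in $(p_1,p_2)$, it suffices to check the criterion on monomial pairs $p_1 = a^i b^j$, $p_2 = a^l b^m$ in $S_q^{(f)}$, for which $w_s^{p_1,p_2}$ reduces to a scalar multiple of $a^{i+l-s}b^{j+m-s}$. This monomial lies in $S_q$ by Lemma \ref{lemma: Sq are admissible}, and the exponents satisfy $(j+m-s)-(i+l-s) = (j-i)+(m-l) \equiv 0 \pmod f$, placing it in $S_q^{(f)}$.

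For part (b), my strategy is to extract two invariants from the $SL_2$-isomorphism class of $X = Spec\bigl(R_q^{(f)}\bigr)$ that together recover the pair $(f,q)$. The first is the $SL_2$-conjugacy class of the stabilizer of a generic point in the dense orbit. Taking the distinguished base point $p \in Spec\bigl(R_q^{(f)}\bigr)$ corresponding to $\text{id} \in SL_2$ under the inclusion $R_q^{(f)} \hookrightarrow k[SL_2]$, the algebra $R_q^{(f)}$ is already $T^{op}$-stable (being spanned by monomials), and $f$ is the smallest positive integer with $b^f \in S_q^{(f)}$. The stabilizer calculation in the proof of Lemma \ref{lemma: general normal affine when dim 2 is right T stable} then applies without modification and identifies $\text{Stab}_p = \mu_f \subset T \subset SL_2$. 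Since $\mu_{f_1}$ and $\mu_{f_2}$ are $SL_2$-conjugate if and only if $f_1 = f_2$, any $SL_2$-equivariant isomorphism forces $f_1 = f_2$.

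Having reduced to $f_1 = f_2 =: f$, the second invariant will be the Hilbert series of $R^{\overline{U}}$ with respect to the left $T$-weight (total-degree) grading. An $SL_2$-equivariant isomorphism induces an isomorphism of $SL_2$-algebras $k[X_1] \cong k[X_2]$, and taking $\overline{U}$-invariants gives an isomorphism of $T$-representations $S_{q_1}^{(f)} \cong S_{q_2}^{(f)}$; in particular, the graded dimensions agree. I would compute explicitly
\[ \dim_k \bigl(S_q^{(f)}\bigr)_n = \#\bigl\{ i \in \mathbb{Z}_{\geq 0} : i \leq n/(1+q),\; n - 2i \equiv 0 \pmod f \bigr\}. \]
Assuming $q_1 < q_2$, for $n$ sufficiently large (and of appropriate parity when $f$ is even) the interval $\bigl(n/(1+q_2),\, n/(1+q_1)\bigr]$ has length exceeding $f/\gcd(2,f)$, so it contains an integer $i$ with $2i \equiv n \pmod f$. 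Such an $i$ contributes to $\dim_k \bigl(S_{q_1}^{(f)}\bigr)_n$ but not to $\dim_k \bigl(S_{q_2}^{(f)}\bigr)_n$, contradicting the dimension equality.

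I expect the main obstacle to be the arithmetic bookkeeping in this last step --- tracking how the parity of $f$ constrains which $n$ admit a solution, and verifying that the relevant intervals really meet the required residue class. The stabilizer identification should be routine once one checks that the minimality argument from Lemma \ref{lemma: general normal affine when dim 2 is right T stable} transfers cleanly to the distinguished base point without further right translation.
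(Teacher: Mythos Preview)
Your proposal is correct and, for part (b), essentially identical to the paper's argument: both extract $f$ from the stabilizer of the dense orbit (via the analysis in Lemma \ref{lemma: general normal affine when dim 2 is right T stable}) and then distinguish the $q$'s by comparing the dimensions of the left $T$-weight spaces of $S_q^{(f)} = \left(R_q^{(f)}\right)^{\overline{U}}$ for large degree. Your explicit formula and interval argument simply spell out what the paper leaves as the one-line observation that $q_1>q_2$ forces $\dim\left(S_{q_1}^{(f)}\right)_n < \dim\left(S_{q_2}^{(f)}\right)_n$ for $n\gg 0$.

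For part (a) there is a small but genuine difference in tactics. The paper observes that $S_q^{(f)} = \left(S_q\right)^{\mu_f^{op}}$ and $R_q^{(f)} = \left(V_{S_q}\right)^{\mu_f^{op}}$, and then inherits all three admissibility conditions from $S_q$ by passing to $\mu_f^{op}$-invariants: finite generation follows from invariant theory of finite groups, normality is preserved under invariants, and closure under multiplication of $V_{S_q}$ descends to its invariant subalgebra. You instead verify condition (c) directly via Proposition \ref{prop: criterion for admissible subalgebras}, reducing by bilinearity to monomial inputs and checking that $a^{i+l-s}b^{j+m-s}$ lands in $S_q^{(f)}$ because the difference of exponents is preserved. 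Both routes are short and valid; the paper's is marginally slicker since it reuses Lemma \ref{lemma: Sq are admissible} wholesale, while yours has the virtue of being entirely self-contained and avoiding the appeal to finite generation of invariant rings.
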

\begin{proof}
\begin{enumerate}[(a)]
    \item Notice that we have $S_q^{(f)} = \left(S_q\right)^{\mu_f^{op}}$ by definition. By Lemma \ref{lemma: Sq are admissible}, the algebra $S_q$ is finitely generated and normal. By \cite{inv_theory_neusel}, this implies that the algebra of invariants $\left(S_q\right)^{\mu_f^{op}}$ is finitely generated. Also $S_q$ being normal implies that $\left(S_{q}\right)^{\mu_f^{op}}$ is normal.

We are left to prove property $(c)$ in Definition \ref{defn: admissible subalgebra}. This amounts to showing that $R_{q}^{(f)}$ is an algebra. But notice that $R_{q}^{(f)} = \left(V_{S_q}\right)^{\mu_f^{op}}$. By Lemma \ref{lemma: Sq are admissible} we know that $V_{S_q}$ is closed under multiplication. We conclude that the $\mu_f^{op}$-invariants $\left(V_{S_q}\right)^{\mu_f^{op}}$ are also closed under multiplication, as desired.

\item It follows from the argument in the proof of Lemma \ref{lemma: general normal affine when dim 2 is right T stable} that the stabilizer of the open dense orbit in $Spec\left(R_{q_1}^{(f_1)}\right)$ (resp. $Spec\left(R_{q_2}^{(f_2)}\right)$) is $\mu_{f_1}$ (resp. $\mu_{f_2}$). If $f_1 \neq f_2$, this shows that $Spec\left(R_{q_1}^{(f_1)}\right)$ and $Spec\left(R_{q_2}^{(f_2)}\right)$ can't possibly be $SL_2$-isomorphic. Assume $f_1 = f_2 = f$. We can look at the algebras of $\overline{U}$-invariants
\[ \left(R_{q_1}^{(f)}\right)^{\overline{U}} = S_{q_1}^{(f)}\]
\[ \left(R_{q_2}^{(f)}\right)^{\overline{U}} = S_{q_2}^{(f)}\]
Suppose without loss of generality that $q_1 > q_2$. Then we have $S_{q_1}^{(f)} \subsetneq S_{q_2}^{(f)}$. This implies that $S_{q_1}^{(f)}$ and $S_{q_2}^{(f)}$ are not isomorphic even as left $T$-representations. This is because for $n>>0$, the dimension of the $T$-weight space $\left(S_{q_1}^{(f)}\right)_n$ must be strictly smaller than the dimension of the $T$-weight space $\left(S_{q_2}^{(f)}\right)_n$.

\end{enumerate}

\end{proof}

\begin{prop} \label{prop: general normal affine when dim 2}
Let $X$ be an affine normal $SL_2$-variety with an open dense orbit. Assume that $X$ is not a homogeneous $SL_2$-variety. Suppose that the ring $k[X]^{\overline{U}}$ has dimension 2. Then there exists a unique positive integer $f$ and a unique rational number $q \geq 1$ such that $X$ is $SL_2$-isomorphic to $Spec\left(R_q^{(f)}\right)$.
\end{prop}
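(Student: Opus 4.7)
The plan is to translate the problem into the combinatorics of a $2$-dimensional lattice semigroup and to apply Lemma \ref{lemma: general normal affine when dim 2 is right T stable}. First I would use that lemma to choose a closed point $p$ in the open dense orbit so that the corresponding subalgebra $R \subset k[SL_2]$ is both left $SL_2$-stable and right $T$-stable. Then $A = R \cap k[a,b]$ is $(T \times T^{op})$-stable and therefore spanned by monomials, so it is completely determined by the semigroup $\Sigma_A = \{(i,j) \in \mathbb{N}^2 : a^i b^j \in A\}$. The normality of $A$ makes $\Sigma_A$ saturated in $\mathbb{Z}\Sigma_A$; the hypothesis $\text{dim}\, A = 2$ forces $\mathbb{Z}\Sigma_A$ to have rank $2$; and Lemma \ref{lemma: criterion for spherical classes of limits} applied with $B_{\gamma} = \overline{B}$ gives $\Sigma_A \subset \{(i,j) : j \geq i\}$.

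Next I would extract from the proof of Lemma \ref{lemma: general normal affine when dim 2 is right T stable} the two further facts it actually establishes about this $p$: that $\text{Stab}_p = \mu_f$ for $f$ the smallest positive integer with $b^f \in A$, and that $\text{Frac}(A) = \text{Frac}(k[a^f, b^f, ab])$. Since a finitely generated monomial subalgebra of $k(a,b)$ is recovered from its fraction field as the set of monomials $a^i b^j$ lying in that subfield, this equality of fraction fields forces $\mathbb{Z}\Sigma_A = L_f$, where $L_f := \{(i,j) \in \mathbb{Z}^2 : f \mid (j - i)\}$ is the rank-$2$ lattice with basis $\{(1,1), (0,f)\}$.

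Finally I would analyze the rational cone $\sigma := \text{Cone}(\Sigma_A) \subset \mathbb{R}^2$. It is $2$-dimensional, contained in $\{i \geq 0,\, j \geq i\}$, and contains the lattice point $(0,f)$, so one of its extremal rays must be $\mathbb{R}_{\geq 0}(0,1)$ and the other has the form $\mathbb{R}_{\geq 0}(1,q)$ for a unique $q \in \mathbb{Q}$ with $q \geq 1$. Saturation then yields $\Sigma_A = \sigma \cap L_f = \{(i,j) \in L_f : i \geq 0,\, j \geq qi\}$, so $A = S_q^{(f)}$ by Definition \ref{defn: S_q^f} and hence $R = V_A = R_q^{(f)}$. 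Uniqueness of the pair $(f,q)$ is exactly Lemma \ref{lemma: the algebras S_q^f are admissible}(b), so no separate argument is required there. The only step that is not immediate bookkeeping is the lattice identification $\mathbb{Z}\Sigma_A = L_f$; once this is in hand everything else is a direct reading of the two-dimensional toric picture that Lemma \ref{lemma: general normal affine when dim 2 is right T stable} has made available.
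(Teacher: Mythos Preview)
Your approach is essentially identical to the paper's: both invoke Lemma \ref{lemma: general normal affine when dim 2 is right T stable} (and the facts established inside its proof about $\text{Stab}_p = \mu_f$ and $\text{Frac}(A)$) to reduce to a saturated monomial semigroup in the lattice $L_f$, then read off the two-dimensional cone to identify $A = S_q^{(f)}$, with uniqueness coming from Lemma \ref{lemma: the algebras S_q^f are admissible}(b). One small correction of wording: the assertion that ``a finitely generated monomial subalgebra of $k(a,b)$ is recovered from its fraction field as the set of monomials lying in that subfield'' is false as stated (for instance all the $S_q$ share the fraction field $k(a,b)$); what is true, and what you actually use, is that the Laurent monomials in $\text{Frac}(A)$ are exactly those with exponent vector in $\mathbb{Z}\Sigma_A$, and this is what yields $\mathbb{Z}\Sigma_A = L_f$.
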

\begin{proof}
By Lemma \ref{lemma: general normal affine when dim 2 is right T stable}, we can choose a closed point $p$ in the dense orbit such that
\begin{enumerate}[(a)]
    \item $\text{lim}_{t \to 0} \, \gamma(t) \cdot p$ exists in $X$ for some $\gamma$ with $B_{\gamma} = \overline{B}$.
    \item The stabilizer of $p$ is the $f$-torsion subgroup $\mu_f \subset T$.
\end{enumerate}
We can use $p$ to view $X$ as $Spec(R)$ for some $SL_2$-stable algebra $R \subset k[SL_2]$. Let $A = R \cap k[a,b]$ be the corresponding admissible subalgebra of $k[a,b]$. It suffices to show that $A = S_q^{(f)}$ for some $q \geq 0$. By Lemma \ref{lemma: general normal affine when dim 2 is right T stable}, $A$ is generated by $T\times T^{op}$-weights in $k[a,b]$. These are just the monomials in $k[a,b]$, so $A$ is determined by a semigroup on the set of monomials. Let $\Sigma_A$ denote the semigroup of $\mathbb{N}^2$ given by
\[ \Sigma_A \vcentcolon = \left\{ (i,j) \in \mathbb{N}^2 \; \mid \; a^ib^j \in A\right\} \]
Also, let $\Sigma^{(f)}$ be the semigroup of $\mathbb{N}^2$ determined by $k[a^f,b^f,ab]$
\[ \Sigma^{(f)} \vcentcolon = \left\{ (i,j) \in \mathbb{N}^2 \; \mid \; a^ib^j \in k[a^f,b^f,ab] \right\} = \left\{ (i,j) \in \mathbb{N}^2 \; \mid \; f \, \, | \, \, (j-i) \right\} \]
We have seen during the proof of Lemma \ref{lemma: general normal affine when dim 2 is right T stable} that there exists a positive integer $f$ such that $A \subset k[a^f, b^f, ab]$ and that
\[\text{Frac}(A) = k(a,b)^{\mu_f^{op}} = k(a^f, b^f,ab)\]
This means that $\Sigma_A \subset \Sigma^{(f)}$ and $\mathbb{Z}\Sigma_A = \mathbb{Z} \Sigma^{(f)}$. Since $A$ is normal, the proof of \cite{cox_toric}[Thm. 1.3.5] implies that $A$ is saturated in $\mathbb{Z}\Sigma^{(f)}$. Also, $\Sigma_A$ is generated by a finite set of monomial generators of $A$. Since $\Sigma_A$ is finitely generated and saturated in $\mathbb{Z}\Sigma^{(f)}$, we must have that $\Sigma_A = C \cap \Sigma^{(f)}$ for some rational cone $C$ on the vector space $\mathbb{R}^2 = \mathbb{Z}^2 \otimes \mathbb{R}$. In two dimensions, such a cone must be of the form $\mathbb{R}^{\geq 0}v_1 + \mathbb{R}^{\geq 0} v_2$ for two (possibly equal) lattice vectors $v_1, v_2 \in \mathbb{Z}^2$. Since $\mathbb{Z} \Sigma_{A} = \mathbb{Z}\Sigma^{(f)}$, this means that $v_1$ and $v_2$ must be linearly independent.

By definition, we have $\Sigma_A \subset \mathbb{N}^2$. Since $B_{\gamma} = \overline{B}$, Lemma \ref{lemma: criterion for spherical classes of limits} implies that $i\leq j$ for all $(i,j)\in \Sigma_A$. So the cone $C$ must be contained in the cone $\mathbb{R}^{\geq 0}(0,1) + \mathbb{R}^{\geq 0} (1,1)$. By the proof of Lemma \ref{lemma: general normal affine when dim 2 is right T stable}, we have $b^f \in A$. This means that $(0,1) \in C$, and so one of the vectors can be taken to be $v_1= (0,1)$. The other linearly independent vector is of the form $v_2 = (n_1,n_2)$ for some positive integers $n_1 \leq n_2$. So the cone is $C = \mathbb{R}^{\geq 0}(0,1)+ \mathbb{R}^2(n_1,n_2)$. We conclude that 
\[A = k[\Sigma_A] = k[C\cap \Sigma^{(f)}]= S_{\frac{n_2}{n_1}}^{(f)}\] 
Uniqueness follows from part $(b)$ of Lemma \ref{lemma: the algebras S_q^f are admissible}.
\end{proof}

Now we can put everything together to get a classification of all affine normal $SL_2$-varieties with an open dense orbit.
\begin{thm}\label{mainresult}
Let $X$ be an affine normal $SL_2$-variety with an open dense orbit. Then $X$ is $SL_2$-isomorphic to exactly one of the following
\begin{enumerate}[(1)]
    \item A homogenenous space $SL_2/H$, where $H$ is as in Proposition \ref{prop: classification of homogeneous $SL_2$-vars}.
    \item (See Definition \ref{defn: Rf}) The spherical variety $SL_2 /\!/ (\mu_f \ltimes \overline{U}) = Spec\left(R_{\infty}^{(f)}\right)$ for a unique positive integer $f$.
    \item (See Definition \ref{defn: S_q^f}) $Spec\left(R^{(f)}_q\right)$ for a unique positive integer $f$ and a unique rational number $q \geq 1$ . In this case the stabilizer of the dense orbit is $\mu_f$.
\end{enumerate}
\end{thm}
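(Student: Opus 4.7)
The plan is to assemble the theorem from the three classification results already at our disposal: Proposition~\ref{prop: classification of homogeneous $SL_2$-vars} for the homogeneous case, Proposition~\ref{prop: general normal affine when dim 1} (together with Theorem~\ref{thm: classification of affine spherical vars of sl2}) for the non-homogeneous case with $\dim k[X]^{\overline{U}} = 1$, and Proposition~\ref{prop: general normal affine when dim 2} for the non-homogeneous case with $\dim k[X]^{\overline{U}} = 2$. The argument then reduces to a case split on whether $X$ is homogeneous and, if not, on $\dim k[X]^{\overline{U}}$, followed by a bookkeeping check that the resulting list has no repetitions.

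Concretely, I would first dispose of the homogeneous case by a direct appeal to Proposition~\ref{prop: classification of homogeneous $SL_2$-vars}, which immediately produces item (1). Otherwise, fixing a closed point $p$ in the dense orbit and writing $X = \Spec(R)$ with $R \subset k[SL_2]$ as in Section~3, consider the admissible subalgebra $A = R \cap k[a,b]$. Since $A$ sits inside $k[a,b]$, its Krull dimension is at most $2$. The value $\dim A = 0$ forces $A = k$, hence $R = k$ by highest weight theory, and $X = \Spec(k) = SL_2/SL_2$ is homogeneous, contradicting the standing assumption. Thus $\dim A \in \{1,2\}$: the former case plugs into Proposition~\ref{prop: general normal affine when dim 1} and then Theorem~\ref{thm: classification of affine spherical vars of sl2}(d) to give item (2), while the latter case is exactly Proposition~\ref{prop: general normal affine when dim 2}, producing item (3) together with the statement about the stabilizer of the dense orbit.

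For the ``exactly one'' part, items (2) and (3) are non-homogeneous by the very argument just given, so they are disjoint from (1). They are separated from one another by the intrinsic invariant $\dim k[X]^{\overline{U}}$, which equals $1$ in (2) and $2$ in (3). Uniqueness of $f$ within (2) is part of Theorem~\ref{thm: classification of affine spherical vars of sl2}, uniqueness of $(f,q)$ within (3) is Lemma~\ref{lemma: the algebras S_q^f are admissible}(b), and uniqueness within (1) is the standard classification of subgroups of $SL_2$ up to conjugation underlying Proposition~\ref{prop: classification of homogeneous $SL_2$-vars}. I do not anticipate any genuinely hard step in this final assembly, since all the substantive work has already been expended in Sections~3--5; the only subtlety is verifying that the varieties listed in (2) and (3) really are non-homogeneous --- they carry proper boundary orbits coming from the Hilbert-Mumford limits that feature in their constructions, and an easy dimension/stabilizer comparison rules out any hidden isomorphism to an $SL_2/H$ in (1).
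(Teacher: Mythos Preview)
Your proposal is correct and follows essentially the same approach as the paper, which simply cites Propositions~\ref{prop: classification of homogeneous $SL_2$-vars}, \ref{prop: general normal affine when dim 1}, and \ref{prop: general normal affine when dim 2} in a single line. You add more detail than the paper does---explicitly ruling out $\dim A = 0$, separating the three cases by the invariant $\dim k[X]^{\overline{U}}$, and tracking down the uniqueness claims---but the underlying case split and ingredients are identical.
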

\begin{proof}
This follows by combining Proposition \ref{prop: classification of homogeneous $SL_2$-vars}, Proposition \ref{prop: general normal affine when dim 1} and Proposition \ref{prop: general normal affine when dim 2}.
\end{proof}

We end this section by describing explicitly the subalgebra $R_q^{(f)} \subset k[SL_2]$. Let $\{a^{i_m}b^{j_m}\}_{m \in I}$ be a finite set of monomial generators of $S_q^{(f)}$. This set can be obtained in practice by imitating the proof of Gordan's lemma \cite{cox_toric}[Prop. 1.2.17]. In other words, we look at $\Sigma^{(f)} \cap \Delta$, where $\Delta$ is a fundamental domain for the discrete subgroup of $\mathbb{R}^2$ generated by $(0,1)$ and $(n_1,n_2)$. Here $n_1,n_2$ are some positive integers such that $\frac{n_2}{n_1} = q$ and $n_2-n_1$ is divisible by $f$.
 
 For each $m \in I$, let us denote by $V^{(m)}$ the left $SL_2$-subrepresentation of $k[SL_2]$ spanned by the highest weight vector $a^{i_m} b^{j_m}$. A simple $SL_2$ computation shows that $V^{(m)}$ has a basis $\left(v_{s}^{(m)}\right)_{s=0}^{i_m +j_m}$, where
 \[v_{s}^{(m)} = \sum_{i+j =s} \binom{i_n}{i} \binom{j_m}{j} \,a^{i_m-i} \,c^{i} \,b^{j_m-j} \,d^j\]
By the proof of the finite generation statement in Lemma \ref{lemma: admissible subalgebras = pointed variaties with dense orbit}, the corresponding $SL_2$-stable subalgebra $R_q^{(f)}$ is generated as a $k$-algebra by the elements $v_{s}^{(m)}$ for $m \in I$ and $0 \leq s \leq i_m + j_m$.

\end{section}

\appendix
\begin{section}{Some lemmas about binomial sums}
In this appendix we collect some lemmas on binomial sums that we use for our classification results.

\begin{lemma} \label{lemma: first binomial sum lemma}
Let $i,j$ be nonnegative integers with $i \leq j$. Then, we have
\[ \sum_{e=0}^{2i} (-1)^e \frac{\binom{2i}{e}}{\binom{i+j}{e} \binom{i+j}{2i-e}} \binom{j}{e} \binom{j}{2i-e} = (-1)^i \, \frac{\binom{2i}{i}}{\binom{i+j}{i}^2} \,  \]

In particular this sum is never $0$.
\end{lemma}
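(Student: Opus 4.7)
The strategy is to first simplify the sum using an elementary binomial ratio identity, and then evaluate the resulting clean sum by coefficient extraction in a two-variable generating function.

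First, I would set $n \vcentcolon = i+j$ and invoke the identity $\binom{j}{e}/\binom{n}{e} = \binom{n-e}{i}/\binom{n}{i}$, which is immediate from the factorial expressions together with the observation that $n - j = i$. Applying it to both ratios $\binom{j}{e}/\binom{n}{e}$ and $\binom{j}{2i-e}/\binom{n}{2i-e}$, the stated equality reduces, after pulling out $1/\binom{i+j}{i}^2$, to the cleaner identity
\[ \sum_{e=0}^{2i} (-1)^e \binom{2i}{e} \binom{n-e}{i}\binom{n-2i+e}{i} \;=\; (-1)^i \binom{2i}{i}. \]

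Next, I would recognize the left-hand side as a coefficient in a product of generating series. Writing $\binom{n-e}{i} = [x^i](1+x)^{n-e}$ and $\binom{n-2i+e}{i} = [y^i](1+y)^{n-2i+e}$, interchanging the finite sum with coefficient extraction, and applying the binomial theorem to the sum over $e$ yields
\[ \sum_{e=0}^{2i} (-1)^e \binom{2i}{e}\binom{n-e}{i}\binom{n-2i+e}{i} \;=\; [x^i y^i]\,(1+x)^{j-i}(1+y)^{j-i}(x-y)^{2i}, \]
where I used $\sum_e (-1)^e \binom{2i}{e}(1+x)^{-e}(1+y)^e = (x-y)^{2i}/(1+x)^{2i}$. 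The key point legitimizing this manipulation is that $n - 2i = j - i \geq 0$ by hypothesis, so everything is a genuine polynomial.

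Finally, expanding $(x-y)^{2i}$ by the binomial theorem, the coefficient of $x^i y^i$ in $(1+x)^{j-i}(1+y)^{j-i}\cdot x^k y^{2i-k}$ equals $\binom{j-i}{i-k}\binom{j-i}{k-i}$, which vanishes unless both $i-k \geq 0$ and $k-i \geq 0$, forcing $k = i$. The unique surviving term contributes $(-1)^{2i-i}\binom{2i}{i} = (-1)^i\binom{2i}{i}$, which matches the claimed value. Nonvanishing is then automatic since both $\binom{2i}{i}$ and $\binom{i+j}{i}$ are positive. The only step that is not mechanical is spotting the ratio identity that opens the first paragraph: once that reduction is made, the generating-function evaluation is routine bookkeeping in an honest polynomial ring.
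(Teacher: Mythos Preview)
Your proof is correct. The initial reduction via the ratio identity $\binom{j}{e}/\binom{i+j}{e} = \binom{i+j-e}{i}/\binom{i+j}{i}$ is the same as in the paper, and both arrive at the cleaner sum
\[
\sum_{e=0}^{2i} (-1)^e \binom{2i}{e}\binom{j+i-e}{i}\binom{j-i+e}{i} = (-1)^i\binom{2i}{i}.
\]
From there the arguments diverge. The paper shows this sum is constant in $j$ by expanding the product $\binom{j+i-e}{i}\binom{j-i+e}{i}$ as a polynomial in $j$ and killing each nonconstant coefficient with the finite-difference identity $\sum_e (-1)^e\binom{2i}{e}e^f=0$ for $f<2i$; it then specializes to $j=i$, where only the $e=i$ term survives. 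Your approach instead interprets the two binomials as coefficients in $(1+x)^{n-e}$ and $(1+y)^{n-2i+e}$, sums over $e$ via the binomial theorem to produce $(1+x)^{j-i}(1+y)^{j-i}(x-y)^{2i}$, and reads off $[x^iy^i]$ directly. Your route is shorter and more conceptual once the generating-function viewpoint is in hand; the paper's route is more elementary in that it never leaves integer arithmetic, but it is lengthier. One small presentational point: the intermediate expression $(1+x)^{-e}$ is not a polynomial, so it is cleaner to factor out $(1+x)^{n-2i}(1+y)^{n-2i}$ first and then recognize $\sum_e(-1)^e\binom{2i}{e}(1+x)^{2i-e}(1+y)^e=(x-y)^{2i}$; this keeps every object a polynomial throughout and makes your remark about $j-i\geq 0$ do its work exactly where needed.
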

\begin{proof}
We can use the identity
\[  \frac{\binom{j}{e} \binom{j}{2i-e} }{\binom{i+j}{e} \binom{i+j}{2i-e}} = \frac{1}{\binom{i+j}{i}^2} \binom{j+i-e}{i}\binom{j-i+e}{i}\]
to write the sum as
\[ \frac{1}{\binom{i+j}{i}^2} \cdot \sum_{e=0}^{2i} (-1)^e \binom{2i}{e} \binom{j+i-e}{i}\binom{j-i+e}{i}\]
We are reduced to showing the identity
\[ \sum_{e=0}^{2i} (-1)^e \binom{2i}{e} \binom{j+i-e}{i}\binom{j-i+e}{i} = (-1)^i\binom{2i}{i} \]
Let's first show that the left hand side does not depend on $j$. Let us fix $i$ and view the expression as a polynomial in $j$. We can expand
\[ \binom{j+i-e}{i}\binom{j-i+e}{i} = \sum_{s=0}^{2i}  p_{s}(e)\,  j^{2i-s}\]
where $p_{s}(e)$ is a polynomial on $e$ with coefficients in $\mathbb{Q}[i]$. Replacing this in the sum above, we get a polynomial in $j$ given by
\[ \sum_{s=0}^{2i} \left( \sum_{e=0}^{2i} (-1)^e\binom{2i}{e} \, p_{s}(e) \right) \, j^{2i-s}\]
In order to show that the sum does not depend on $j$, it suffices to show that $\sum_{e=0}^{2i} (-1)^e\binom{2i}{e}p_{s}(e) = 0$ for $s<2i$. Notice that $p_{s}(e)$ has degree at most $s$ in the variable $e$. So we can write
\[p_{s} = \sum_{f=0}^{s} q_{s,f}(i) \, e^{f}\]
for some $q_{s,f} \in \mathbb{Q}[i]$. We can use this to get
\[ \sum_{e=0}^{2i} (-1)^e\binom{2i}{e} \, p_{s}(e) = \sum_{f=0}^{s} q_{s,f}(i) \sum_{e=0}^{2i}(-1)^e \binom{2i}{e} e^{f} \]
Therefore we are reduced to showing that $\sum_{e=0}^{2i}(-1)^e \binom{2i}{e} e^{f} = 0$ for $f \leq s<2i$. In order to show this, we use the binomial theorem. Notice that the binomial theorem implies that
\[ \sum_{e=0}^{2i} x^e \binom{2i}{e} e^f = \left(x\frac{d}{dx}\right)^{f} (1+x)^{2i} \]
We can plug in $x=-1$ in the left hand side to get $\sum_{e=0}^{2i}(-1)^e \binom{2i}{e} e^{f}$. On the other hand, if we plug in $x=-1$ in the right hand side we get $0$ (one can see this by using the product rule and $f<2i$).

We conclude that the sum
\[ \sum_{e=0}^{2i} (-1)^e \binom{2i}{e} \binom{j+i-e}{i}\binom{j-i+e}{i} \]
does not depend on $j$. So we can evaluate by plugging in any value of $j$ we wish. If we plug in $j=i$, then we see that the only nonzero summand occurs when $e=i$. Hence we get
\begin{gather*}
    \sum_{e=0}^{2i} (-1)^e \binom{2i}{e} \binom{j+i-e}{i}\binom{j-i+e}{i} \xlongequal{j=i} \sum_{e=0}^{2i} (-1)^e \binom{2i}{e} \binom{2i-e}{i}\binom{e}{i} = (-1)^i\binom{2i}{i}
\end{gather*} 
as desired.
\end{proof}

\begin{lemma} \label{lemma: second binomial sum lemma}
Let $m$ be a positive integer and let $0 \leq i \leq m$. Then we have
\[ \sum_{e =0}^2 (-1)^e \frac{\binom{2}{e}}{\binom{2m}{e}\binom{2m}{2-e}} \binom{m}{e} \binom{m+i}{2-e} = \frac{mi^2-m^2}{2m^2\cdot(2m-1)} \]
\end{lemma}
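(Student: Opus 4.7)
The statement is a closed-form evaluation of a sum with only three terms (indexed by $e=0,1,2$), so the most efficient approach is a direct computation followed by an algebraic simplification. I would not try to invoke any fancier machinery.

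First I would write out the three summands explicitly. For $e=0$, one has $\binom{2}{0}=1$, $\binom{2m}{0}=1$, $\binom{2m}{2}=m(2m-1)$, $\binom{m}{0}=1$, and $\binom{m+i}{2}=\tfrac{(m+i)(m+i-1)}{2}$, giving a contribution
\[\frac{(m+i)(m+i-1)}{2m(2m-1)}.\]
For $e=1$, the factors $\binom{2}{1}=2$, $\binom{2m}{1}^2=4m^2$, $\binom{m}{1}=m$ and $\binom{m+i}{1}=m+i$ produce (with the sign $(-1)^1$) the contribution
\[-\,\frac{m+i}{2m}.\]
For $e=2$, using $\binom{2m}{2}=m(2m-1)$, $\binom{m}{2}=\tfrac{m(m-1)}{2}$ and $\binom{m+i}{0}=1$, the summand is
\[\frac{m-1}{2(2m-1)}.\]

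Next I would clear to a common denominator of $2m(2m-1)$. The numerator becomes
\[(m+i)(m+i-1)-(m+i)(2m-1)+m(m-1).\]
Factoring $(m+i)$ out of the first two terms collapses them to $(m+i)(i-m)=i^2-m^2$, and combining with $m(m-1)=m^2-m$ leaves $i^2-m$. Thus the full sum equals
\[\frac{i^2-m}{2m(2m-1)} \;=\; \frac{m\,i^2-m^2}{2m^2(2m-1)},\]
which is exactly the claimed right-hand side.

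There is really no obstacle here — the only care required is book-keeping of the three binomial products and the sign on $e=1$. In particular, one should notice the convenient cancellation $(m+i-1)-(2m-1)=i-m$ that reduces the otherwise quadratic numerator to a linear combination before the final simplification.
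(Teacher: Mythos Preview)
Your computation is correct and is exactly the ``tedious algebra'' the paper alludes to: write out the three summands, put them over the common denominator $2m(2m-1)$, and simplify the numerator via $(m+i)(m+i-1)-(m+i)(2m-1)=(m+i)(i-m)$. There is nothing to add.
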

\begin{proof}
This is just tedious algebra. One needs to sum three terms and operate with the fractions to arrive at the expression in the right-hand side.
\end{proof}

The following sum is needed for some of the arguments in the last section.
\begin{lemma} \label{lemma: third binomial sum lemma}
Let $f, m,n$ be positive integers with $f \leq m \leq n$. Then
\[\sum_{e=0}^{f} (-1)^e \frac{\binom{f}{e} \binom{n-m}{e}}{\binom{n}{e}} = \frac{\binom{m}{f}}{\binom{n}{f}}  \]

In particular, this sum is not $0$.
\end{lemma}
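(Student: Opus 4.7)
The plan is to reduce the stated identity to a classical one about alternating sums of shifted binomial coefficients via an elementary rewriting, and then to prove the classical identity by a one-line generating function computation.

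First I would use the symmetry
\[ \frac{\binom{n-m}{e}}{\binom{n}{e}} \;=\; \frac{\binom{n-e}{m}}{\binom{n}{m}}, \]
which follows immediately from the fact that both sides equal $\frac{n!}{e!\,m!\,(n-m-e)!} \cdot \frac{1}{n!/((n-e)!)}$ after clearing factorials (i.e.\ both cross-multiply to $\binom{n}{e}\binom{n-e}{m} = \binom{n}{m}\binom{n-m}{e}$, a trinomial rearrangement). Pulling $\binom{n}{m}^{-1}$ out of the sum, the lemma becomes equivalent to
\[ \sum_{e=0}^{f} (-1)^e \binom{f}{e}\binom{n-e}{m} \;=\; \frac{\binom{m}{f}\binom{n}{m}}{\binom{n}{f}}. \]
A further factorial manipulation shows $\binom{m}{f}\binom{n}{m} = \binom{n}{f}\binom{n-f}{m-f}$, so the right-hand side simplifies to $\binom{n-f}{m-f}$.

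Thus the lemma reduces to the classical identity
\[ \sum_{e=0}^{f} (-1)^e \binom{f}{e}\binom{n-e}{m} \;=\; \binom{n-f}{m-f}. \]
For this I would argue at the level of generating functions: the coefficient of $x^m$ in $(1+x)^{n-e}$ is $\binom{n-e}{m}$, so
\[ \sum_{e=0}^{f} (-1)^e \binom{f}{e}(1+x)^{n-e} \;=\; (1+x)^{n-f}\sum_{e=0}^{f}(-1)^e\binom{f}{e}(1+x)^{f-e} \;=\; (1+x)^{n-f}\bigl((1+x)-1\bigr)^{f} \;=\; x^{f}(1+x)^{n-f}. \]
Extracting the coefficient of $x^m$ on both sides yields $\binom{n-f}{m-f}$, as desired.

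Finally, the hypothesis $f \leq m \leq n$ guarantees $0 \leq m-f \leq n-f$, so $\binom{n-f}{m-f} > 0$, and hence the original sum is nonzero. There is no serious obstacle here; the only step requiring any care is the initial symmetry $\binom{n-m}{e}/\binom{n}{e} = \binom{n-e}{m}/\binom{n}{m}$, which converts an unwieldy ratio of binomials into an alternating sum amenable to the standard finite-difference/generating-function trick.
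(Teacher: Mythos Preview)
Your proof is correct. The binomial symmetry you use to eliminate the denominator, the further rewriting of the right-hand side as $\binom{n-f}{m-f}$, and the generating-function computation all check out cleanly, and the nonvanishing follows immediately from $f \leq m \leq n$.

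The paper takes a different route: it rewrites the sum in terms of Pochhammer symbols as the terminating hypergeometric series ${}_2F_1(-f,\,y-x;\,-x;\,1)$ (with $x=n$, $y=m$) and then simply invokes the Chu--Vandermonde identity from the literature. Your approach is more elementary and self-contained: rather than citing Chu--Vandermonde, you in effect \emph{prove} the relevant special case by the one-line finite-difference trick $\sum_e (-1)^e\binom{f}{e}(1+x)^{n-e} = x^f(1+x)^{n-f}$. The paper's version is shorter if one is happy to quote a reference; yours requires no outside input and makes the mechanism (an $f$-th finite difference annihilating lower-order terms) transparent. Either argument is entirely adequate here.
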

\begin{proof}
Fix the positive integer $f$. Let's interpret $n,m$ as variables. For psychological purposes, let's rename $n=x$ and $m=y$. The identity amounts to an equality of rational functions in the variables $x,y$
\[ \sum_{e=0}^f (-1)^e \binom{f}{e} \frac{\prod_{i=0}^{e-1}(x-y-i)}{\prod_{j=0}^{e-1}(y-i)} \; = \; \frac{\prod_{i=0}^{e-1}(y-i)}{\prod_{i=0}^{e-1}(x-i)} \]
We can rewrite this using the Pochhammer symbol to simplify notation
\[ \sum_{e=0}^f \frac{(-f)_e \, (y-x)_e}{n! (-x)_e} = \frac{(-y)_f}{(-x)_f}\]
The left-hand side is just the finite hypergeometric function $_2F_1(-f, \,y-x ; \,-x;\,1)$ (evaluated at $z=1$). The last equality of rational functions above is a direct consequence of the Chu-Vandermonde identity \cite{special_functions}[Cor. 2.2.3].
\end{proof}

\end{section}

\bibliography{classification_affine_sl2_vars.bib}
\bibliographystyle{alpha}

\footnotesize

  \textsc{Department of Mathematics, Cornell University,
    310 Malott Hall, Ithaca, New York 14853, USA}\par\nopagebreak
  \textit{E-mail address}, \texttt{ajf254@cornell.edu}
  
  \textit{E-mail address}, \texttt{rmh322@cornell.edu}
\end{document}